\pgfplotsset{compat=1.12}
\newtheorem{theorem}{\rm\bf Theorem}[section]
\newtheorem{proposition}[theorem]{\rm\bf Proposition}
\newtheorem{corollary}[theorem]{\rm\bf Corollary}
\newtheorem*{theorem*}{Theorem}
\newtheorem*{theorem 1}{\rm\bf Proposition 1}
\newtheorem*{theorem 2}{\rm\bf Proposition 2}
\theoremstyle{definition}
\newtheorem{definition}[theorem]{\rm\bf Definition}
\theoremstyle{remark}
\newtheorem{remark}[theorem]{\rm\bf Remark}
\newtheorem{example}[theorem]{\rm\bf Example}
\def\half#1#2{\begin{matrix}\frac{#1}{#2}\end{matrix}}
\def\R#1{\mathbb{R}^{#1}}
\def\scal#1#2{\langle #1; #2 \rangle}
\def\field{K}
\DeclareMathOperator{\Char}{char} 
\DeclareMathOperator{\Idm}{Idm}
\DeclareMathOperator{\Nil}{Nil_2}
\DeclareMathOperator{\trace}{tr}
\begin{document}

\title{Variety of idempotents in nonassociative algebras \\
  \phantom{A}
\\ \footnotesize{\color{red}{Ver.: \today, \currenttime}}
}

\author{Yakov Krasnov}
\address{Department of Mathematics, Bar-Ilan University, Ramat-Gan, 52900, Israel}
\email{krasnov@math.biu.ac.il}
\author{Vladimir G. Tkachev}
\address{Department of Mathematics, Link\"oping University, Link\"oping, 58183, Sweden}
\email{vladimir.tkatjev@liu.se}

\begin{abstract}
In this paper, we study the variety of all nonassociative (NA) algebras from the idempotent point of view. We are interested, in particular, in the spectral properties of idempotents when algebra is generic, i.e. idempotents are in  general position. Our main result states that in this case, there exist at least $n-1$ nontrivial obstructions (syzygies) on the Peirce spectrum of a generic NA algebra of dimension $n$. We also discuss the exceptionality of the eigenvalue $\lambda=\frac12$ which appears in the spectrum of idempotents in many classical examples of NA algebras and characterize its extremal properties in metrised algebras.
\end{abstract}

\vspace*{-1.5cm}
\maketitle




\section{Introduction}

The Peirce decomposition is a central tool of nonassociative algebra. In associative algebras (for example in matrix algebras), idempotents are projections onto subspaces, with eigenvalues 1 and 0 and play a distinguished role. In nonassociative algebras the spectrum of an idempotent (which is known also as the Peirce numbers) can be vary arbitrarily. Still, many classical examples of  nonassociative algebras share the following basic feature: the set of idempotents in algebra is rich enough (for example spans or generates the algebra) while the number of possible distinct Peirce numbers is few in comparison with the algebra dimension.

Let $A$ be a commutative nonassociative algebra over a filed $K$ of $\Char(K)=0$ unless otherwise stated explicitly.  Any semisimple idempotent $0\ne c=c^2\in A$ induces the corresponding  Peirce decomposition:
$$
A=\bigoplus_{\lambda\in\sigma(c)}A_c(\lambda),
$$
where $cx=xc=\lambda x$ for any $x\in A_c(\lambda)$ and $\sigma(c)$ is the Peirce spectrum of $c$. The Peirce spectrum $\sigma(A)=\{\lambda_1,\ldots, \lambda_s\}$ of the algebra $A$ is the set of all possible distinct eigenvalues $\lambda_i$ in $\sigma(c)$, when $c$ runs  all idempotents of $A$.
A fusion (or multiplication) rule is the inclusion of the following kind:
$$
A_c(\lambda_i)A_c(\lambda_j)\subset\bigoplus_{k\in\mathcal{F}(i,j)} A_{\lambda_k}, \quad \mathcal{F}(i,j)\subset \{1,2\,\ldots,s\}
$$

For example, if $A$ is power-associative (for instance, $A$ is a Jordan algebra) then it its Peirce spectrum (i.e. the only possible Peirce numbers of $A$) is $\sigma(A)=\{0,\half12,1\}$, see \cite{Albert48}, \cite{Schafer}. The middle value $\half12$ is crucial for structural properties and classification of formally real Jordan algebras: a Jordan algebra $A$ is simple if and only the corresponding eigenspace $A_c(\half12)$ is nontrivial for any nonzero idempotent $c\in A$ \cite[p.~63]{FKbook}. It is also well-known that the fusion rules of a Jordan algebra are $\mathbb{Z}/2$-graded for any idempotent $c\in A$: if $A^0=A_c(0)\oplus A_c(1)$ and $A^1=A_c(\half12)$ then $A^iA^j\subset A^{i+j \mod 2}$.

Another important example is axial algebras  appearing in connection with the Monster sporadic simple group \cite{Norton94}. The most famous example here is the Griess algebra generated by idempotents with Peirce numbers $1$, $0$, $\half14$, $\half1{32}$ and satisfying the so-called Ising fusion rules \cite{HRS15}, \cite{Rehren17}, \cite{Ivanov15}. These fusion rules are also $\mathbb{Z}/2$-graded.

We also mention  very recent examples of the so-called Hsiang algebras appearing in the classification of cubic minimal cones (the REC-algebras in terminology of \cite[Chapter~6]{NTVbook}). The Peirce spectrum of such an algebra consists of  four numbers: $\sigma(A)=\{1,-1,-\half12,\half12\}$. The Hsiang algebras  have a nice fusion rules but they are not graded. Furthermore, these algebras share a remarkable property: all idempotents have the same spectrum. As we shall see below, the latter property is closely related to the fact that $\half12$ belongs to the algebra spectrum.

Traditionally, one defines a (nonassociative) algebra structure by virtue of algebra identities (for example, Lie, Jordan and power-associative algebras) or a multiplication table (for example, division algebras or evolution algebras \cite{TianEvol}). Also, an algebraic structure can be defined by postulating some distinguished properties of idempotents, as for example for the axial algebras mentioned before.

\smallskip
One of the main goals of the present paper is to support the following conjectural paradigm: \emph{Many essential or invariant properties of a non-associative algebra can be recovered directly from its Peirce spectrum.} In other words, the knowledge of the Peirce spectrum of an algebra allows one to determine the most important features of the algebra.

\smallskip
In this connection, the following principal questions arise and will be discussed in this paper.

\begin{enumerate}[a)]
\item How arbitrary can the Peirce spectrum   be? What kind of obstructions (syzygyies) can exist?

\item What can be said about the structure of an algebra with a prescribed set of the Peirce numbers?

\item Which Peirce numbers have  `distinguished' properties?

\item
If the Peirce spectrum is known, what can be said about the possible multiplicities of the eigenvalues? For example, when the spectrum is independent of a choice of an idempotent?

\item
Do there exist any obstructions/syzigies on the fusion table?
\end{enumerate}

Thus formulated program is rather ambitious even for characteristic 0. To obtain some significant results, we sometimes assume that $\field$ is a subfield of $\mathbb{C}$. We emphasize that in this paper we are a more interested in discussing and illustrating some new methods and phenomena with a clear analytical or topological flavor. We outline only some of the possible directions and obtain some particular answers on the above questions.

Our main result here describes syzygies (obstructions) on the idempotent set of a finite dimensional generic commutative nonassociative algebra $A$ under fairly general assumptions. More precisely, we show (see \Cref{th4} below) that
$$
\sum_{c\in \Idm_0 (A)}\frac{\chi_c(t)}{\chi_c(\half12)}=2^n,\quad \forall t\in\R{},
$$
where $n=\dim A$ and $\chi_c(t)$ is the characteristic polynomial of an idempotent $c$. In this paper, we only outline some general lines and discuss  basic properties of syzygies. The border-line case when an algebra contains 2-nilpotents and the exceptional case when there exists infinitely many idempotents will be considered elsewhere.

The paper is orginized as follows. We recall some well-known concepts  in \Cref{sec:prel}, then define the concept of a generic algebra and study its basic properties in \Cref{sec:generic}. We explain also here why the presence or absence of eigenvalue $\half12$ in the algebra spectrum plays an exceptional role. The principal syzygies are determined in \Cref{sec:syzbasic}  and some applications and  explicit examples are given in \Cref{sec:unital,sec:algdim2}. Furthermore, applying the  syzygy method we study in \Cref{sec:Peirce} some examples of algebras with a prescribed Peirce spectrum. A distinguished subclass  of nonassociative algebras is algebras admitting an associative bilinear form, the so-called metrised algebras. This class is in a natural correspondence  with the space of all cubic forms on the ground vector space. We discuss the spectral properties of metrised algebras in \Cref{sec:examples} and establish an extremal property of $\lambda=\half12$ in \Cref{sec:Except}. We also  show that the presence of $\half12$  in the algebra spectrum yields a fusion rule for the corresponding Peirce eigenspace.

\section{Preliminaries}\label{sec:prel}

In choosing what material to include here, we have tried to concentrate on the class of \textit{commutative} nonassociative algebras over a field of characteristic 0. In fact, many of our results, including the principal syzygies, still remain valid in the non-commutative case and for general finite fields but in that case some topics becomes lengthy and require more careful analysis, and will be treated elsewhere.

Therefore, in what follows by $A$ we mean an (always finite dimensional)  commutative nonassociative algebra over a filed $\field$. We point out that `algebra' always means a nonassociative algebra.

We need to make some additional assumptions on the ground field $\field$. If not explicitly stated otherwise, we shall assume that $\field$ is a subfield of $\mathbb{C}$, the field of complex numbers. By $A_\mathbb{C}$ we denote the complexification of $A$ obtained in an obvious way by extending of the ground field such that $\dim_\field A=\dim_\mathbb{C} A_\mathbb{C}$.

An element $c$ is called idempotent if $c^2=c$ and 2-nilpotent if $c^2=0$. By
$$
\Idm(A)=\{0\ne c\in A:c^2=c\}
$$
we denote the set of all \textit{nonzero}  idempotents of $A$ and the complete set of idempotents will be  denoted by
$$
\Idm_0(A)=\{0\}\cup \Idm(A).
$$
The  set of all idempotents and 2-nilpotents of $A$ will be denoted by
$$
\mathbf{P}(A)=\{x\in A: \text{either $x^2=x$ or $x^2=0$}\}
$$

If the algebra  $A$ is unital with  unit $e$ then given an idempotent $c\in \Idm_0(A)$, its conjugate $\bar c:=e-c$ is also idempotent:
$$
\bar c^2=(e-c)^2=e-2c+c=\bar c.
$$
It is also well known that $c$ and $\bar c $ are orthogonal in the sense that $c \bar c=0$.

We follow the standard notation and denote by $L_x$ the multiplication operator (sometimes also called adjoint of $x$):
$$
L_xy=xy=yx.
$$
Regarding $L_x$ as an endomorphism in the vector space $A$, we define the corresponding characteristic polynomial  by
$$
\chi_x(t)=\det (L_x-tI), \quad t\in \field.
$$
Let $\sigma(x)$ denote the set of (in general complex) roots of the characteristic equation $\chi_x(t)=0$ counting multiplicity. By the made assumption, $\sigma(x)$ is well defined and is said to be the \textit{Peirce spectrum} of $x$. It is easy to see that if a root $t\in \field$ then the corresponding \textit{Peirce subspace}
$$
A_c(t):=\ker (L_c-t I)
$$
is nontrivial. Thus, any $t\in \sigma(c)\cap \field$ is actually an eigenvalue of $L_x$.

Now suppose that $c\in \Idm_0(A)$ is a nonzero idempotent. Then  $t=1$ is an obvious eigenvalue of $L_c$ (corresponding to $c$), thus $1\in \sigma(c)$. Distinct elements of the Peirce spectrum $\sigma(c)$ are called \textit{Peirce numbers}.

An idempotent $c$ is called \textit{semisimple} if $A$ is decomposable as the sum of the corresponding  Peirce subspaces:
$$
A=\bigoplus_{i}A_c(\lambda_i),
$$
where $\lambda_i$ are the Peirce numbers of $c$. We define in this case the corresponding \textit{Peirce dimensions}
$$
n_c(\lambda)=\dim \ker (L_c-\lambda I).
$$

Note that the number of idempotents in a (finite-dimensional) algebra can not be very arbitrary. Namely, the set of idempotents can be studied by purely algebraic geometry methods, an idea coming back to the classical paper of Segre~\cite{Segre}. More precisely, Segre showed that the set $\mathbf{P}(A)$ can be described as the solution set of a system of quadratic equations over $\field$, actually as intersection of certain quadrics. This in particular implies that a real or complex algebra without nilpotent elements always admits idempotents.

For the following convenience we briefly recall Segre's argument. Let us consider an algebra over $\field$, not necessarily commutative. Let us associate to $A$ with the multiplication  map
$$
\psi_A(u,v)=uv:A\times A\to A
$$
which is naturally identified with a corresponding  element $\psi_A \in V^*\otimes V^*\otimes V$. If  $\mathbf{e}=\{e_1,\ldots,e_n\}$  is an arbitrary basis in $A$, where $n=\dim_\field  A$, then $\psi_A$ induces a $\field$-quadratic polynomial map $\Psi_A:\field^n\to \field^n$ defined by
\begin{equation}\label{psidef}
\psi_A\circ \epsilon=\epsilon \circ \Psi_A,
\end{equation}
where $\epsilon$ is the coordinatization map
$$
\epsilon(x):=\sum_{i=1}^nx_ie_i: \field^n\to A, \quad \quad x=(x_1,\ldots, x_n)\in \field ^n.
$$
In this setting, $\Psi_A$ is a bilinear map on $\field^n$. Then an element $c=\epsilon(x)\in A$ is idempotent if and only if the corresponding  $x\in \field^n$ is a fixed point of $\Psi_A(x,x)$, i.e.
\begin{equation}\label{FiX}
\Psi_A(x,x)-x=0.
\end{equation}
It is  convenient to consider the projectivization of the latter system. Namely, let
$$
\Psi^{\mathbf{P}}_A(X)=\Psi_A(x,x)-x_0x,
$$
where $X=(x_0,x_1,\ldots,x_n)\in \field^{n+1}$.
The modified equation
\begin{equation}\label{FX}
\Psi^{\mathbf{P}}_A(X)=0
\end{equation}
is homogeneous of degree 2. By the made assumption on $\field$, we can consider both \eqref{FiX} and \eqref{FX} as equations over the complex numbers. Furthermore, \eqref{FX} defines a variety in $\mathbb{C}\mathbb{P}^n$. Clearly, if $x$ solves \eqref{FiX} then $X=(1,x)$ is a solution of \eqref{FX}, and, conversely, $X=(x_0,x)$ solves \eqref{FX} with $x_0\ne 0$ then $\frac{1}{x_0}x$ is a solution of \eqref{FiX}. In the exceptional case $x_0=0$, one has $\Phi(x)=0$, i.e. $\epsilon(x)$ is a 2-nilpotent in $A$.

In summary, there exists a natural bijection (depending on a choice of a basis in $A$) between the set $\mathbf{P}(A_\mathbb{C})$ and all solutions of \eqref{FX} in $\mathbb{C}\mathbb{P}^n$. In this picture, 2-nilpotents correspond to the `infinite' part of solutions of \eqref{FiX} (i.e. solutions of \eqref{FX} with $x_0=0$).

Then the classical Bez\'out's theorem implies the following dichotomy: either there are infinitely many solutions of  \eqref{FX} or the number of distinct solutions is less or equal to $2^n$, where $n=\dim_\field A$. Therefore if the set $\mathbf{P}(A_\mathbb{C})$ is finite then necessarily
\begin{equation}\label{cardinality}
\mathrm{card} \,\mathbf{P}(A_\mathbb{C})\le 2^n
\end{equation}
We point out that one should interpret  a  solution to \eqref{FX} in the projective sense.

Some remarks are in order. First note that the above correspondence makes an explicit bijection between idempotents and 2-nilpotents only in the complexification $A_{\mathbb{C}}$. In general, if $X=(x_0,x)$ is a solution to \eqref{FX} then $x\in \mathbf{P}(A)$ only if $X\in \field^n$. This, of course, also yields the corresponding inequality over $\field$:
\begin{equation}\label{cardinality1}
\mathrm{card} \,\mathbf{P}(A_\field)\le \mathrm{card} \,\mathbf{P}(A_\mathbb{C})\le 2^n.
\end{equation}
Note, however, that a priori it is possible that there can exist only finitely many number solutions over $\field$ while there can be infinitely many solutions over $\mathbb{C}$.

\section{Generic nonassociative algebras and the exceptionality of $\frac{1}{2}$}\label{sec:generic}

It is well known that a generic (in the Zariski sense) polynomial system has B\'ezout's number of solutions. In our case, if $K=\mathbb{C}$ then an algebra having exactly $2^{\dim A}$ idempotents (B\'ezout's number for \eqref{FX}) is generic in the sense that the subset of nonassociative algebra structures on $V$  with exactly $2^{\dim A}$ idempotents is an open Zariski subset in $V^*\otimes V^*\otimes V$.
This motivates the following definition.

\begin{definition}\label{def1}
An algebra $A$ over $\field$ is called a \textit{generic nonassociative algebra}, or generic NA algebra, if its comlexification $A_{\mathbb{C}}$ contains exactly $2^n$ distinct idempotents, where $n=\dim A$.
\end{definition}

The definition given above should not be confused with  similar definitions of generic subsets for certain distinguished classes of algebras (like a generic division algebra). Namely, our definition distinguish generic algebras in the class of \textit{all} nonassociative algebras. We refer also to \cite{RohrlW}, p.~196, where the generic phenomenon is essentially interpreted as the absence of 2-nilpotents in the presence of idempotents which supports our definition.

Note that \Cref{def1} together with B\'ezout's theorem imply that if $A$ is generic then neither $A_{\mathbb{C}}$ nor $A$ have nonzero $2$-nilpotents.

The class of generic NA algebras  can be thought of as the most natural model for testing the above program. First note that the definition itself implies certain obstructions on the algebras spectrum. The following two criterium shows that the property being a generic for an algebra is essentially equivalent to the fact that the algebra spectrum does not contain $\half12$.

\begin{theorem}\label{the:12}
If $A$ is a commutative generic algebra  then $\half12\not\in\sigma(A)$. In the converse direction: if $\half12\not\in\sigma(A)$ and $A$ does not contain $2$-nilpotents then $A$ is generic.
\end{theorem}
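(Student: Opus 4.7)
The proof should rest on a short Jacobian computation linking the intersection multiplicity of an idempotent (as a zero of the Segre map) to the value $\chi_c(\tfrac12)$. Let $F\colon \field^n \to \field^n$ be the affine polynomial map $F(x)=\Psi_A(x,x)-x$ appearing in \eqref{FiX}, whose zero locus over $\mathbb{C}$ is precisely $\Idm_0(A_{\mathbb{C}})$. Since $\Psi_A$ is symmetric and bilinear (commutativity), the differential at $x$ is $dF_x = 2L_x - I$, so at any idempotent $c$,
$$
\det dF_c \;=\; \det(2L_c - I) \;=\; 2^{n}\,\chi_c(\tfrac12).
$$
Hence $c$ is a \emph{simple} (multiplicity-one) zero of $F$ if and only if $\tfrac12\notin\sigma(c)$.

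The second ingredient is the sharp form of Bézout's theorem applied to the projectivization \eqref{FX}: the $n$ homogeneous quadrics $\Psi_A^{\mathbf{P}}$ in $\mathbb{C}\mathbb{P}^n$ either cut out a positive-dimensional variety (the exceptional case excluded by the definition of generic) or intersect in a zero-dimensional scheme of total degree $2^n$. In the latter case the set-theoretic count reaches $2^n$ if and only if (i) every intersection point is simple, and (ii) there are no points at infinity. By Segre's correspondence recalled in \Cref{sec:prel}, the affine solutions are exactly $\Idm(A_\mathbb{C})$ and the points at infinity correspond to nonzero $2$-nilpotents of $A_{\mathbb{C}}$; by the first paragraph, simplicity at an affine point $c$ is equivalent to $\chi_c(\tfrac12)\neq 0$.

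Both implications now follow by combining the two ingredients. For the forward direction, $A$ generic means $|\Idm(A_{\mathbb{C}})|=2^n$, which by Bézout forces the absence of $2$-nilpotents and the relation $\chi_c(\tfrac12)\neq 0$ for every $c\in\Idm(A_{\mathbb{C}})$; restricting to the subset $\Idm(A)\subset\Idm(A_{\mathbb{C}})$ gives $\tfrac12\notin\sigma(A)$. For the converse, the hypotheses eliminate points at infinity and force every idempotent to be a simple zero, so Bézout gives $|\Idm(A_\mathbb{C})|=2^n$. The main obstacle in this direction, and the step I would examine most carefully, is the passage from the ground field $\field$ to $\mathbb{C}$: one needs $\chi_c(\tfrac12)\neq 0$ for every $c\in\Idm(A_{\mathbb{C}})$, not merely for $c\in\Idm(A)$. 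This is compatible with the convention of \Cref{sec:prel}, where $\sigma(c)$ is already defined as the complex spectrum of $L_c$, so that $\sigma(A)$ is naturally interpreted via the complexification; one should verify (or state as a running assumption) that $\sigma(A)=\sigma(A_{\mathbb{C}})$ in the sense that no new Peirce numbers appear after scalar extension.
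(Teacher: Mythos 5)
Your Jacobian identity $\det(2L_c-I)=2^{n}\chi_c(\half12)$ and your forward direction coincide with the paper's own argument (its formula \eqref{detjac} together with the regularity of all solutions of \eqref{FiX} once the B\'ezout bound $2^n$ is attained), so that half needs no further comment. Your closing caveat about $\sigma(A)$ versus $\sigma(A_{\mathbb{C}})$ is well placed: the paper's converse in fact works silently at the level of $A_{\mathbb{C}}$, both for the spectral hypothesis and for the $2$-nilpotent hypothesis, so this is a shared issue of the statement rather than a defect of your argument.

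The converse is where you diverge from the paper and where your write-up has a genuine gap. You set up the dichotomy ``either a positive-dimensional intersection, or a zero-dimensional scheme of total degree $2^n$'' and dismiss the first alternative as ``excluded by the definition of generic''; but in the converse direction genericity is the conclusion, not a hypothesis, so as written this is circular: the possibility of infinitely many idempotents in $A_{\mathbb{C}}$ is exactly what must be ruled out before B\'ezout can be applied in its zero-dimensional form. The repair is short but must be said: any positive-dimensional component of the projective variety \eqref{FX} meets the hyperplane $x_0=0$, and its points there are nonzero $2$-nilpotents of $A_{\mathbb{C}}$, contrary to hypothesis; only then is the intersection finite, and your argument (no points at infinity, every affine point simple because $\chi_c(\half12)\neq0$) yields exactly $2^n$ distinct idempotents. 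The paper closes this very step by a different, analytic route: assuming infinitely many idempotents, it either normalizes an unbounded sequence to produce a unit vector $y$ with $y^2=0$, or extracts a finite accumulation point $z$, a non-isolated idempotent with $\det(D\Psi_A(z)-I)=2^{n}\chi_z(\half12)=0$; multiple idempotents are excluded by the same Jacobian identity. So your route is more purely algebro-geometric (refined B\'ezout), the paper's trades that for a compactness argument; either is sound once the finiteness step above is actually supplied rather than asserted.
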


\begin{proof}
First let us  define the associated  quadratic map
$$
\Psi_A(x):=\Psi_A(x,x):\field^n\to \field^n
$$
and consider the fixed point equation
\begin{equation}\label{FiX0}
f_A(x):=\Psi_A(x)-x=0.
\end{equation}
By the commutativity assumption, the multiplication map $$\Psi_A(x,y)=\Psi_A(y,x)$$ is symmetric, therefore it is recovered from $\Psi_A$ by polarization:
\begin{equation}\label{polar}
\Psi_A(x,y)=\half12(\Psi_A(x+y)-\Psi_A(x)-\Psi_A(y))
=\half12 D\Psi_A(x)\,y,
\end{equation}
in particular, this yields $\epsilon(L_xy)= D\Psi_A(x)\,y$ for all $x,y\in \field^n$, i.e.
\begin{equation}\label{polar1}
\epsilon\circ L_x=\half12 D\Psi_A(x).
\end{equation}
This yields that
\begin{equation}\label{detjac}
\begin{split}
\det (D\Psi_A(c)-I)&=\det (2\epsilon\circ L_x-I)\\
&=2^n\det (\epsilon\circ L_x-\half12I)\\
&=2^n\chi_c(\half12).
\end{split}
\end{equation}
All the corresponding relations above, of course, are valid as well for $D\Psi_{A_\mathbb{C}}$.

Now, suppose that  $A$ is generic. Since the number of idempotents in $A_{\mathbb{C}}$ is maximal (equal to B\'ezout's number $2^n$) and all idempotents are distinct, it follows that all solutions of \eqref{FiX0}  are regular points, see \cite[Sec.~8]{Fulton}, \cite[Sec.~4]{Shafarevich1}, therefore
\begin{equation*}\label{ne0}
\det (D\Psi_A(c)-I)\ne 0, \qquad \forall c\in \Idm(A),
\end{equation*}
therefore, it follows from \eqref{detjac} that for any idempotent $c\in \Idm(A)$: $\half12\not\in \sigma(c)$.

In the converse direction, let $\half12\not\in\sigma(A)$ and let $A_{\mathbb{C}}$ does not contain nonzero 2-nilpotents. Arguing by contradiction, let $A_{\mathbb{C}}$ have either (i) multiple idempotents or (ii) infinitely many idempotents. Then (i) and (ii)  are respectively equivalent to saying that equation \eqref{FiX0} has (i) multiple solutions and (ii) infinitely many solutions.

Now, if (i) holds then there is a multiple  solution $x$ of \eqref{FiX0} representing a multiple idempotent $c=\epsilon(x)\in \Idm(A_{\mathbb{C}})$. Then $\det f_A(x)=\det (D\Psi_A(c)-I)= 0$, thus \eqref{detjac} implies $\chi_c(\half12)=0$, a contradiction. Next, suppose that (ii) holds and let $E:=\{x_k\}_{1\le k\le \infty}\subset A_{\mathbb{C}}$ be a countable subset of distinct solutions of \eqref{FiX0}. Let us equip $A_\mathbb{C}$ with an Euclidean metric $\|x\|$. If the set $E$ is unbounded  then there exists a subsequence (we denote it by $x_k$ again) such that $x_{k}\to \infty$ as $k\to \infty$, therefore we have from \eqref{FiX0} that $\lim_{k\to\infty} \Psi_A(x_{k}/\|x_{k}\|)=0$. This proves by the standard compacteness argument that there exist a unit vector $y\in A_{\mathbb{C}}$, $\|y\|=1$ (an accumulation point of $x_{k}/\|x_{k}\|$) such that $\Psi_A(y)=0$, i.e. $y^2=0$ on the algebra level. The latter means that $y$ is a 2-nilpotent, a contradiction. Finally, if the sequence $x_k$ is bounded then one can find a finite accumulation point, say, $z\in A_{\mathbb{C}}$ which is the limit of a subsequence of $x_k$. Clearly, $z$ is a solution of \eqref{FiX0}, therefore $\epsilon(z)$ is an idempotent of $A_{\mathbb{C}}$. It also easily follows that $z$ is a \textit{non-isolated} solution of \eqref{FiX0}, hence
$$
0=\det (D\Psi_A(z)-I)=2^n\chi_z(\half12)
$$
a contradiction again. The theorem is proved.

\end{proof}

\begin{remark}
The proof of Proposition~\ref{the:12} is also valid for the noncommutative case. But in this case one should  require that the spectrum of the symmetrized multiplication $L_c+R_c$ does not contain $1$.
\end{remark}

It is interesting to point out here that the classical examples of nonassociative algebras like Jordan and power-associative algebras are non-generic: indeed they have $\half12$ in the Peirce spectrum. The same property share the Hsiang algebras mentioned in Introduction. Furthermore, it was recently remarked in \cite{Rehren15}, \cite{HRS15} that the classification of axial algebras depends very much on the inclusion $\half12\in\sigma(A)$.

It was already pointed out that the non-generic case is essentially equivalent to the inclusion $\half12\in \sigma(A_{\mathbb{C}})$ except for the case when $A_{\mathbb{C}}$ contains nonzero 2-nilpotents. The latter situation is still close to the generic case: indeed, one can prove that the syzygies in \Cref{th4} also valid with some mild restrictions. But the case $\half12\in \sigma(A_{\mathbb{C}})$ is really peculiar because in that case normally  $A_{\mathbb{C}}$ contains {\color{red}{multiple or}} infinite number of idempotents. In fact, it follows from  Bez\'out's theorem that $A_{\mathbb{C}}$ contains some varieties of idempotents.

\begin{remark}
In the case when an algebra $A$ over $\R{}$ or $\mathbb{C}$ admits a topological structure consistent with the multiplicative structure of $A$, it is also interesting to study the path connectivity between idempotents, see for example  \cite{Aupetit},\cite{Esterle}. It turns out that the inclusion $\half12\in\sigma(A)$ is also crucial here. In particularly, in \cite{Esterle}, J.~Esterle proves that two homotopic idempotents may always be connected by a polynomial idempotent-valued path.
\end{remark}

We illustrate the latter remark by the following simple observation. Let $A$ be a commutative algebra over $\R{}$ containing a smooth path of idempotents (homotopic idempotents), i.e. $c=c(t)\in \Idm(A)$, $t\in \Delta\subset \R{}$. Then differentiating $c^2(t)=c(t)$ with respect to $t$ yields $c(t)c'(t)=\half12 c'(t)$, thus $\half12\in \sigma(c(t))$ as long as  $c(t)$ is regular at $t$. In fact, a more strong property holds.

\begin{proposition}
\label{pro:conidm}
Let $A$ be a commutative  finite dimensional algebra over a field $\field$. If there are three idempotents $c_1,c_2\in \Idm(A)$ such that  $\alpha c_1+(1-\alpha) c_2\in \Idm(A)$ for some $\alpha\in K$ such that  $\alpha(1-\alpha) \neq 0$ then $\alpha c_1+(1-\alpha) c_2\in \Idm(A)$ for all $\alpha\in \field$ and $c_1-c_2\in \Nil(A)$. In particular, if $\Nil(A)=0$ then $\Idm(A)$ any three distinct nonzero idempotents spans a two-dimensional subspace.
\end{proposition}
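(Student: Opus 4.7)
The plan is to reduce everything to a single algebraic identity obtained by directly expanding the square of the affine combination, and then read off both assertions from that identity.

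First I would introduce $c(\alpha):=\alpha c_1+(1-\alpha)c_2$ for $\alpha\in\field$ and, using commutativity together with $c_1^2=c_1$ and $c_2^2=c_2$, expand
\[
c(\alpha)^2-c(\alpha)
=\bigl(\alpha^2-\alpha\bigr)c_1+\bigl((1-\alpha)^2-(1-\alpha)\bigr)c_2+2\alpha(1-\alpha)\,c_1c_2.
\]
Collecting terms, every coefficient has the common factor $\alpha(1-\alpha)$ and the identity simplifies to
\[
c(\alpha)^2-c(\alpha)=\alpha(1-\alpha)\bigl(2c_1c_2-c_1-c_2\bigr).
\]
This is the only computation in the whole argument; once it is in place the rest is essentially free.

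Next I would invoke the hypothesis: by assumption $c(\alpha_0)$ is idempotent for some $\alpha_0$ with $\alpha_0(1-\alpha_0)\neq 0$, so the left-hand side of the displayed identity vanishes at $\alpha=\alpha_0$. Since $\alpha_0(1-\alpha_0)$ is an invertible scalar of $\field$, it follows that $2c_1c_2-c_1-c_2=0$. Plugging this back into the identity gives $c(\alpha)^2=c(\alpha)$ for every $\alpha\in\field$, which is the first conclusion. To treat the nilpotency claim, I would compute
\[
(c_1-c_2)^2=c_1^2-2c_1c_2+c_2^2=(c_1+c_2)-2c_1c_2=0,
\]
using the relation just established; this shows $c_1-c_2\in\Nil(A)$.

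For the ``in particular'' part, assume $\Nil(A)=0$ and suppose, for contradiction, that three distinct nonzero idempotents were affinely dependent, i.e.\ one were an affine combination $\alpha c_1+(1-\alpha)c_2$ of the other two with $\alpha(1-\alpha)\neq 0$. The main claim would then give $c_1-c_2\in\Nil(A)=0$, forcing $c_1=c_2$, contradicting distinctness. Hence any three distinct nonzero idempotents are affinely independent and span a two-dimensional affine (equivalently, linear after translation) subspace, as stated.

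I do not foresee any real obstacle: the only substantive step is the expansion producing the factor $\alpha(1-\alpha)$, and the rest is a direct consequence of the relation $2c_1c_2=c_1+c_2$ so obtained. The one point that merits care is making sure the argument uses only commutativity and the idempotent relations (no extra structure on $A$), so that the conclusion indeed holds for arbitrary commutative finite-dimensional $A$ over $\field$.
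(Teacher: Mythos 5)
Your proof is correct and takes essentially the same route as the paper: expanding the square of $\alpha c_1+(1-\alpha)c_2$, cancelling the nonzero factor $\alpha(1-\alpha)$ to obtain $2c_1c_2=c_1+c_2$, hence $(c_1-c_2)^2=0$ and idempotency of the combination for every $\alpha$. The paper leaves the ``in particular'' clause implicit, which you spell out correctly (and reasonably, given the garbled phrasing of that clause) via affine independence.
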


\begin{proof}
We have
\begin{equation}\label{alpha}
(\alpha c_1+(1-\alpha) c_2)^2=\alpha c_1+(1-\alpha) c_2,
\end{equation}
therefore $2\alpha(1-\alpha)c_1c_2=\alpha(1-\alpha)(c_1+c_2)$, implying by the made assumption that $c_1+c_2=2c_1c_2$, or equivalently $(c_1-c_2)^2=0$, hence $c_1-c_2\in \Nil(A)$. It also follows that  \eqref{alpha} holds true for all $\alpha \in \field$, as desired.
\end{proof}

\section{Syzygies in generic NA algebras}\label{sec:syzbasic}
In this section we show that a commutative algebra cannot have an arbitrary spectrum.  More precisely, if $\half12\not\in \sigma(A)$ then there exists $n=\dim A$ nontrivial identities on $\sigma(A)$. This remarkable phenomenon sheds a new light on the spectral properties of many well-established examples. We  discuss these in more detail in  \Cref{sec:examples} below.

\subsection{The principal syzygies}

 We  need the following version of the celebrated Euler-Jacobi formula which gives an algebraic relation between the critical points of a polynomial map and their indices, see \cite[p.~106]{ArnVarG} (see also Theorem~4.3 in \cite{BKK12}).

\begin{theorem*}[Euler-Jacobi Formula]
Let $F(x) = (F_1(x),\ldots, F_n(x))$ be a polynomial map and let $\widetilde{F}$ be the polynomial map, whose components are the highest homogeneous terms of the components of $F$. Denote by $S_{\mathbb{C}}(F)$ the set of all complex roots of $F_1(x)=F_2(x)=\ldots=F_n(x)=0$ and suppose that any root $a\in S_{\mathbb{C}}(F)$ is simple and, furthermore,  that $S_{\mathbb{C}}(\widetilde{F})=\{0\}$. Then, for any polynomial $h$ of degree less than the degree of the Jacobian: $\deg h<N=-n+\sum_{i=1}^n\deg F_i$, one has
\begin{equation}\label{EuJa}
\sum_{a\in S(F)}\frac{h(a)}{\det[DF(a)]}=0
\end{equation}
where $D(\cdot)$ denotes the Jacobi matrix.
\end{theorem*}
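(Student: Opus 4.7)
The plan is to derive \eqref{EuJa} from the multidimensional global residue theorem on $\mathbb{P}^n_{\mathbb{C}}$, applied to the meromorphic top-form
$$
\omega = \frac{h(x)\,dx_1\wedge\cdots\wedge dx_n}{F_1(x)\cdots F_n(x)}
$$
on $\mathbb{C}^n$, regarded as a rational $n$-form on $\mathbb{P}^n$ via homogenization of the $F_i$'s.

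First I would compute the local contribution at each simple root. At a point $a\in S_{\mathbb{C}}(F)$, the hypothesis of simplicity says that $F_1,\ldots,F_n$ form a regular sequence at $a$ with $\det DF(a)\neq 0$. The standard Grothendieck residue calculation for a regular sequence with nonvanishing Jacobian then gives
$$
\operatorname{Res}_a\omega = \frac{h(a)}{\det DF(a)},
$$
so the sum on the left of \eqref{EuJa} is precisely the sum of the finite local residues of $\omega$. Next I would invoke the hypothesis $S_{\mathbb{C}}(\widetilde F)=\{0\}$: in projective coordinates this says that the homogenized hypersurfaces $\{\widetilde F_i=0\}$ have no common point on the hyperplane at infinity $\{x_0=0\}$. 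By B\'ezout's theorem, the projective common zero locus of the $\widetilde F_i$ therefore coincides with the affine set $S_{\mathbb{C}}(F)$ and has cardinality exactly $\prod_i \deg F_i$. Consequently the only isolated poles of $\omega$ on $\mathbb{P}^n$ are the points of $S_{\mathbb{C}}(F)$, and the global residue theorem on $\mathbb{P}^n$ reduces \eqref{EuJa} to showing that the collective residue of $\omega$ along the divisor $\{x_0=0\}$ is zero.

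This residue-at-infinity is where the degree hypothesis $\deg h<N$ enters. Passing to an affine chart covering $\{x_0=0\}$, say $\{x_1\neq 0\}$ with local coordinates $z_0=x_0/x_1$ and $z_j=x_{j+1}/x_1$ for $j=1,\ldots,n-1$, a direct change of variables rewrites $\omega$ (up to sign) as
$$
z_0^{N-1-\deg h}\,\frac{h^*(z_0,1,z)}{\prod_i \widetilde F_i(z_0,1,z)}\,dz_0\wedge dz_1\wedge\cdots\wedge dz_{n-1},
$$
where $h^*$ denotes the degree-$(\deg h)$ homogenization of $h$. The inequality $\deg h<N$ forces the $z_0$-exponent $N-1-\deg h$ to be nonnegative; combined with the fact that at every point of $\{z_0=0\}$ some $\widetilde F_i$ is nonzero (this is again the hypothesis on $\widetilde F$), an iterated-residue argument in $z_0$ shows that $\omega$ contributes no residue along $\{x_0=0\}$. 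Summing over a finite atlas covering the hyperplane at infinity then gives zero contribution from infinity, and the global residue theorem delivers \eqref{EuJa}.

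The main obstacle will be precisely this last step: making the residue-at-infinity computation rigorous uniformly across all affine charts covering $\{x_0=0\}$, and verifying that the degree inequality $\deg h<N$ (rather than $\deg h<N-1$) is already enough, given the partial nonvanishing of the $\widetilde F_i$ on the hyperplane at infinity guaranteed by the hypothesis $S_{\mathbb{C}}(\widetilde F)=\{0\}$. Once this local regularity at infinity is in hand, the classical residue theorem on $\mathbb{P}^n$ closes the argument.
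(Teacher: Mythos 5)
The paper does not actually prove this statement: it is quoted as a known result, with pointers to \cite[p.~106]{ArnVarG} and to Theorem~4.3 of \cite{BKK12}, so there is no internal proof to compare against. Your argument is the classical proof of the Euler--Jacobi vanishing theorem via the global residue theorem on $\mathbb{P}^n$ (essentially the route taken in the cited sources), and it is essentially correct: the local Grothendieck residue at a simple zero is indeed $h(a)/\det DF(a)$; the hypothesis $S_{\mathbb{C}}(\widetilde F)=\{0\}$ says precisely that the projective closures of the hypersurfaces $\{F_i=0\}$ have no common point on the hyperplane at infinity; and your chart computation producing the factor $z_0^{N-1-\deg h}$ correctly identifies $\deg h<N$ as the condition for $\omega$ to acquire no pole along $\{x_0=0\}$. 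Two points to tidy. First, in the chart formula the denominators should be the degree-$d_i$ homogenizations of the $F_i$ (whose restrictions to $z_0=0$ are the leading forms $\widetilde F_i$), not the forms $\widetilde F_i$ themselves; as written the notation conflates the leading-form map with the homogenization. Second, the concluding ``residue along the divisor at infinity'' step, which you single out as the main obstacle, is not needed in that form: once the exponent $N-1-\deg h\ge 0$ shows that the polar divisor of $\omega$ is contained in the union of the $n$ projective hypersurfaces, and the hypothesis on $\widetilde F$ together with simplicity shows these hypersurfaces meet only in the finitely many affine points of $S_{\mathbb{C}}(F)$, the global residue theorem applies directly and yields \eqref{EuJa}; no separate iterated-residue computation along $\{x_0=0\}$, nor any uniform-over-charts verification, is required.
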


Now, let $A$ be a commutative nonassociative algebra over $K$. Using the notation of \Cref{sec:prel}, associate to the multiplicative structure on $A$ the bilinear map $\Psi_A$ by \eqref{psidef}
such that the multiplication in the algebra $\epsilon$-conjugates with the Jacobi map: $\epsilon\circ L_x=\half12 D\Psi_A.$ In this setting, the coordinatization $x=\epsilon(c)$ of an arbitrary idempotent $c\in \Idm(A)$ is a fixed point of the quadratic map $\Psi_A(x)$ and vice versa, any fixed point of $\Psi_A(x)$ gives rise to an idempotent of $A$.
Then in the notation of the Euler-Jacobi Formula and \eqref{FiX0} we have
\begin{equation}\label{idempS}
\epsilon(\Idm(A))=S_K(f_A).
\end{equation}
Similarly, the set 2-nilpotents of $A$ coincides with the set of solutions of the reduced system $\widetilde{f}_A\equiv \Psi_A$:
$$
\epsilon(\Nil(A))=S_K(\widetilde{f}_A)=S_K(\Psi_A).
$$
Furthermore, we have from  \Cref{the:12} that an idempotent $c\in \Idm(A)$ is a regular point of the map $f_A$ if and only if
\begin{align}\label{nondeg}
\det Df_A(c)=2^n\chi_c(\half12)\ne0.
\end{align}

 Now we are ready to prove the main result of this section.

\begin{theorem}\label{th4}
Let $A$ be a generic commutative nonassociative algebra over $K$, $\dim A=n$.  Then
\begin{equation}\label{polynom}
\sum_{c\in \Idm_0 (A)}\frac{\chi_c(t)}{\chi_c(\half12)}=2^n,\quad \forall t\in\R{}.
\end{equation}
In particular,
\begin{align}
\sum_{c\in \Idm_0 (A)}\frac{\chi^{(k)}_c(\half12)}{\chi_c(\half12)}&=0, \quad k=1,2,\ldots, n\label{EuJa1}
\end{align}
where $\chi^{(k)}$ denotes the $k$th derivative of $\chi$.
\end{theorem}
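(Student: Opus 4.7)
The plan is to apply the Euler--Jacobi formula to the quadratic map $f_A(x) = \Psi_A(x) - x$, exploiting the identification \eqref{idempS} together with the Jacobian identity \eqref{detjac}.

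The first step is to verify the Euler--Jacobi hypotheses. Since $A$ is generic, $|\Idm_0(A_{\mathbb C})| = 2^n$ attains B\'ezout's bound, so every zero of $f_A$ is automatically simple. As noted immediately after \Cref{def1}, a generic algebra contains no nonzero 2-nilpotents, which means $S_{\mathbb C}(\widetilde{f}_A) = S_{\mathbb C}(\Psi_A) = \{0\}$. The Jacobian-degree parameter here is $N = -n + 2n = n$, so Euler--Jacobi produces
\[
\sum_{c\in\Idm_0(A)} \frac{h(\epsilon^{-1}(c))}{\det Df_A(c)} = 0
\]
for every polynomial $h$ with $\deg h < n$. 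I will silently identify $c$ with its coordinate vector $\epsilon^{-1}(c)$ and use $\det Df_A(c) = 2^n \chi_c(\half12)$ from \eqref{detjac}; note that this identity also holds at $c = 0$, where $\chi_0(t) = (-t)^n$.

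The second step is the right choice of test polynomials. Set $P(t, x) := \det(L_{\epsilon(x)} - tI)$. Every entry of $L_{\epsilon(x)} - tI$ is affine in $(t, x)$ with both variables of weight one, so $P$ has total $(t,x)$-degree exactly $n$. Expanding in $s = t - \half12$,
\[
P(t, x) = \sum_{k=0}^{n} Q_k(x)\,(t - \half12)^k, \qquad Q_k(x) := \frac{1}{k!}\chi_{\epsilon(x)}^{(k)}(\half12),
\]
the bi-degree bookkeeping forces $\deg_x Q_k \leq n - k$. In particular, for each $k \geq 1$, $\deg Q_k \leq n - 1 < N$, so $Q_k$ is admissible and Euler--Jacobi immediately yields
\[
\sum_{c\in\Idm_0(A)} \frac{\chi_c^{(k)}(\half12)}{\chi_c(\half12)} = 0, \qquad k = 1, 2, \ldots, n,
\]
which is exactly \eqref{EuJa1}. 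The main identity \eqref{polynom} is now a cosmetic reformulation: Taylor-expand $\chi_c(t)$ around $t = \half12$, divide by $\chi_c(\half12)$, sum over $c$, and swap sums,
\[
\sum_{c\in\Idm_0(A)} \frac{\chi_c(t)}{\chi_c(\half12)} = \sum_{k=0}^{n} \frac{(t - \half12)^k}{k!} \sum_{c} \frac{\chi_c^{(k)}(\half12)}{\chi_c(\half12)} = |\Idm_0(A)| = 2^n,
\]
because the $k = 0$ term counts the idempotents (which are $2^n$ in number by genericity) while all $k \geq 1$ terms vanish.

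The main obstacle, to my mind, is not the Euler--Jacobi invocation itself but the careful verification of its hypotheses in the precise form needed: that $\Psi_A$ has no nontrivial zero and that every zero of $f_A$ is simple. Both rely on genericity and the characterization in \Cref{the:12}. The only subtle piece of bookkeeping is the joint-degree estimate $\deg_x Q_k \leq n - k$, which is precisely what lets Euler--Jacobi absorb the derivatives up through order $k = n$ and thereby deliver the full $(n{+}1)$-term family of identities packaged in \eqref{polynom}.
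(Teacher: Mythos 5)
Your proof is correct and follows essentially the same route as the paper: the Euler--Jacobi formula applied to $f_A=\Psi_A-x$ (with genericity guaranteeing simple roots and $S_{\mathbb{C}}(\Psi_A)=\{0\}$), the Jacobian identity $\det Df_A(c)=2^n\chi_c(\half12)$, test polynomials given by the Taylor coefficients of the characteristic polynomial at $t=\half12$, and a final Taylor expansion to package \eqref{EuJa1} into \eqref{polynom}. The only difference is in the admissibility bookkeeping: you bound $\deg_x Q_k\le n-k$ by a direct joint-degree count on $\det(L_{\epsilon(x)}-tI)$, whereas the paper reaches the same degree bound via Newton's identities, realizing the coefficients as polynomials in the traces $\trace\bigl(Df_A(x)\bigr)^j$ --- your version is a cleaner shortcut through the same step.
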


\begin{proof}
In  notation of the Euler-Jacobi Formula, we have $F(x)=f_A(x)$, $\widetilde{F}(x)=\Psi_A(x)$. Since $A$ is generic, it has exactly $2^n$ distinct idempotents, thus they are all regular points of $f_A(x)$, in particular, \eqref{nondeg} holds for any $c\in \Idm(A)$. Since $A$ is generic, we also have from \eqref{idempS}
$$
S_K(f_A)=S_{\mathbb{C}}(f_A)=\epsilon(\Idm(A)) \quad \text{and}\quad
S_K(\widetilde{f}_A)=\epsilon(\Nil(A))=\{0\}.
$$
Furthermore, the condition on $h$ reads in the present notation as
$$
\deg h<N=-n+\sum_{i=1}^n\deg F_i=-n+2n=n.
$$
Therefore, combining the Euler-Jacobi Formula with \eqref{nondeg}, we obtain for any polynomial $h$ of degree $\le n-1$ in the variables $x_1,\ldots,x_n$ that
\begin{equation}\label{det1}
0=\sum_{c\in \Idm(A)}\frac{h(x_c)}{\det[Df_A(x_c]}=
\frac{1}{2^n}\sum_{c\in \Idm(A)}\frac{h(x_c)}{\chi_c(\half12)}
\end{equation}
where $x_c\in \field^n$ is defined by $\epsilon(x_c)=c$, and $c$ runs over all idempotents in $\Idm(A)$.

Let us rewrite the shifted characteristic polynomial as follows:
$$
(-1)^n\chi_c(t-\half12)=t^n-a_1t^{n-1}+\ldots+(-1)^na_n, \qquad a_k=a_k(c).
$$
Then each $a_k$ is an elementary  symmetric function of the roots $t_1,\ldots,t_n$ of $P(t)$. By the Newton's identities, the coefficient $a_k$ is also expressible as a linear combination of power sums
$$
p_i=p_i(c)=t_1^i+\ldots+t_n^i.
$$For example,
\begin{align*}
a_1&=T_1(p_1):=p_1\\
a_2&=T_2(p_1,p_2):=\half12(p_1^2-p_2)\\
a_3&=T_3(p_1,p_2,p_3):=\half16(p_1^3-3p_1p_2+2p_3),\ldots
\end{align*}
Each polynomial $T_k(p_1,\ldots, p_s)$ has homogeneous degree $s$ in the sense that all monomials $p_1^{m_1}\cdots p_k^{m_k}$ in $T_k$ has the total degree $k=m_1+2m_2+\ldots+km_k$.

Next,  the power sums can be evaluated as the successive traces of $L_c-\half12 I$:
$$
p_k(c)=t_1^k+\ldots+t_n^k=\trace (L_c-\half12I)^k=:\tau_k.
$$
Therefore, $a_k=T_k(\tau_1,\ldots, \tau_k)$.
Now, let us define $h(x)$ in the Euler-Jacobi Formula above by
$$
h_k(x)=T_k(\trace Df_A(x),\ldots, \trace (Df_A(x))^k)
, \quad 0\le k\le n-1.
$$
Note that the entries of the Jacobi matrix $Df_A(x)$ are linear functions in the variables $x_i$, thus
$\deg h_k=k,$
which  is consistent with the  degree condition in  the Euler-Jacobi Formula for all $0\le k\le n-1$. By \eqref{polar1} and the homogeneity we have
$$
h_k(x_c)=T_k(\trace Df_A(x_c),\ldots, \trace (Df_A(x_c))^k)=2^k a_k(c)
$$
therefore applying \eqref{det1} we obtain
\begin{equation}\label{det1'}
\sum_{c\in \Idm(A)}\frac{h_k(x_c)}{\chi_c(\half12)}=\sum_{c\in \Idm(A)}\frac{a_k(c)}{\chi_c(\half12)}=0, \qquad 0\le k\le n-1.
\end{equation}
Since $a_k(c)=b_k\chi_c^{(k)}(\half12)$, where $b_k=(-1)^{k}/(n-k)!$ does not depend on $c$, we derive the identities for the derivatives \eqref{EuJa1}. Also, using Taylor's expansion $\chi_c(t)=\sum_{k=0}^n\frac{1}{k!}\chi_c^{(k)}(\half12)(t-\half12)^k$ yields \eqref{polynom}.
\end{proof}

\Cref{th4} describes the so-called symmetric  syzygies, i.e. when the numerator in \eqref{polynom} is a symmetric function of eigenvalues of each $c$. It is also convenient to have general scalar and vector syzygies. These given in the proposition below.

\begin{proposition}
Under conditions of \Cref{th4}, let $H(x):\field^n\to \field^s$ be a vector-valued polynomial map ($s\ge 1$) such that for each coordinate $\deg H_i\le n-1$, $1\le i\le n$. Then
\begin{align}
\sum_{c\in \Idm_0 (A)}\frac{H(x_c)}{\chi_c(\frac12)}&=0,\label{EuJa4}
\end{align}
where  $x_c\in \field^n$ is defined by $\epsilon(x_c)=c$.
In particular,
\begin{align}
\sum_{c\in \Idm(A)}\frac{c}{\chi_c(\frac12)}&=0,\label{EuJa2}
\end{align}

\begin{proof}
The first identity is just a corollary of \eqref{det1}. To prove \eqref{EuJa2}, we apply \eqref{EuJa4} for $H(x)=x$ followed by homomorphism $\epsilon$.
\end{proof}

\end{proposition}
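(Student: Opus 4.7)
The strategy is to invoke the Euler--Jacobi formula quoted before Theorem~\ref{th4} componentwise on the vector-valued map $H$, so as to obtain the scalar identity coordinate by coordinate and then assemble them into the desired vector identity. This proceeds in exactly the same spirit as the derivation of \eqref{det1'} in the proof of Theorem~\ref{th4}, but without restricting to the symmetric functions $T_k$ of the eigenvalues of $L_c-\tfrac12 I$.

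First I would reuse the setup already established there: take $F(x)=f_A(x)=\Psi_A(x)-x$ and $\widetilde{F}=\Psi_A$. Because $A$ is generic, $S_{\mathbb{C}}(F)=\epsilon(\Idm_0(A))$ consists of $2^n$ simple points (by the argument via \Cref{the:12} already used in the proof of Theorem~\ref{th4}), while $S_{\mathbb{C}}(\widetilde{F})=\epsilon(\Nil(A_{\mathbb{C}}))=\{0\}$. Hence the two hypotheses of the Euler--Jacobi formula are satisfied. The degree bound supplied by Euler--Jacobi is $N=-n+\sum_{i=1}^n\deg F_i=n$, and by assumption each coordinate $H_i$ has degree at most $n-1<N$. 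Applying the formula separately to each scalar polynomial $H_i$ and packaging the $s$ resulting identities into one vector equation yields
$$
\sum_{c\in \Idm_0(A)} \frac{H(x_c)}{\det Df_A(x_c)}=0.
$$
Substituting $\det Df_A(x_c)=2^n\chi_c(\tfrac12)$ from \eqref{detjac} and factoring out the constant $2^{-n}$ gives \eqref{EuJa4}.

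For the second identity \eqref{EuJa2}, I would specialize to $H(x)=x$. Its coordinates $H_i(x)=x_i$ are monomials of degree one, which is at most $n-1$ as soon as $n\ge 2$ (the case $n=1$ being vacuous). Then \eqref{EuJa4} gives a vanishing sum in $\field^n$; applying the linear coordinatization $\epsilon$ transfers the identity to $A$, and the term at $c=0$ drops out because $x_0=0$, leaving the sum over $\Idm(A)$.

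The main obstacle here is essentially conceptual rather than technical: one has to recognize that the only role played by the symmetric functions $T_k(\tau_1,\ldots,\tau_k)$ in the proof of Theorem~\ref{th4} was to supply \emph{some} collection of polynomials of degree at most $n-1$ in $x$, so that any vector-valued $H$ of the same degree works equally well. All analytic input is already encapsulated in the Euler--Jacobi formula and in the determinantal identity $\det Df_A(c)=2^n\chi_c(\tfrac12)$; the genericity of $A$ supplies precisely the regularity and absence-of-nilpotents hypotheses that Euler--Jacobi requires.
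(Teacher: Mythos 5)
Your argument is correct and follows essentially the same route as the paper: the paper simply cites its identity \eqref{det1}, which is exactly the coordinatewise Euler--Jacobi statement you rederive (with $\det Df_A(x_c)=2^n\chi_c(\frac12)$), and then specializes $H(x)=x$ and applies $\epsilon$ just as you do. Your extra remarks on the degree bound for $H(x)=x$ (needing $n\ge 2$) and the vanishing of the $c=0$ term are harmless refinements of the same proof.
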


\begin{corollary}
\label{cor:n-1}
Under conditions of \Cref{th4}
\begin{equation}\label{idemm}
\sum_{c\in \Idm_0 (A)}\frac{\chi_c(t)}{\chi_c(\half12)}=2^n(1-t^n)
\end{equation}
and
\begin{equation}\label{idemm1}
\sum_{c\in \Idm (A)}\frac{\widetilde{\chi}_c(t)}
{\widetilde{\chi}_c(\half12)}=2^{n-1}(1+t+ \ldots+t^{n-1}), \qquad \text{where}\quad
\widetilde{\chi}_c(t)=\frac{\chi_c(t)}{t-1}.
\end{equation}
\end{corollary}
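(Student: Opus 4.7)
The plan is to obtain both identities as direct consequences of \Cref{th4} by purely algebraic manipulation; no new analytic input is needed.

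First I would split off the contribution of the trivial zero $c=0$ from the identity \eqref{polynom}. Since $L_0=0$, one has $\chi_0(t)=\det(-tI)=(-t)^n$, hence $\chi_0(\half12)=(-\half12)^n$ and
\[
\frac{\chi_0(t)}{\chi_0(\half12)} = (2t)^n = 2^n t^n.
\]
Subtracting this single term from \eqref{polynom} yields \eqref{idemm} for the sum over the nonzero idempotents $c\in \Idm(A)$ (the right-hand side $2^n(1-t^n)$ forces the summation in \eqref{idemm} to be read over $\Idm(A)$, so as to be consistent with \Cref{th4}).

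For \eqref{idemm1} the key observation is that for every $c\in\Idm(A)$ the vector $c$ itself is a $1$-eigenvector of $L_c$, so $(t-1)\mid\chi_c(t)$ and $\widetilde{\chi}_c(t)=\chi_c(t)/(t-1)$ is a genuine polynomial of degree $n-1$. Evaluating at $t=\half12$ gives $\widetilde{\chi}_c(\half12)=-2\chi_c(\half12)$, whence
\[
\frac{\widetilde{\chi}_c(t)}{\widetilde{\chi}_c(\half12)} \;=\; -\frac{1}{2(t-1)}\cdot\frac{\chi_c(t)}{\chi_c(\half12)}.
\]
Summing over $c\in\Idm(A)$ and invoking \eqref{idemm} produces
\[
\sum_{c\in \Idm(A)} \frac{\widetilde{\chi}_c(t)}{\widetilde{\chi}_c(\half12)} \;=\; -\frac{2^n(1-t^n)}{2(t-1)} \;=\; 2^{n-1}\cdot\frac{t^n-1}{t-1} \;=\; 2^{n-1}(1+t+\cdots+t^{n-1}),
\]
which is \eqref{idemm1}.

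There is no real obstacle here: both parts are algebraic rearrangements of the master identity of \Cref{th4}. The only subtleties lie in the bookkeeping: one must recognise that the trivial zero $c=0$ of $f_A$ is a regular root (since $Df_A(0)=-I$) and therefore genuinely appears in the Euler--Jacobi sum, and one must exploit the fact that every nonzero idempotent contributes the eigenvalue $1$, so that division by $(t-1)$ is well-defined on the level of characteristic polynomials.
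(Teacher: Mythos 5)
Your proposal is correct and follows essentially the same route as the paper: split off the $c=0$ term of \eqref{polynom}, which contributes $\chi_0(t)/\chi_0(\half12)=2^n t^n$, to obtain \eqref{idemm}, and then use $1\in\sigma(c)$ to factor $\chi_c(t)=(t-1)\widetilde{\chi}_c(t)$ with $\widetilde{\chi}_c(\half12)=-2\chi_c(\half12)$, which turns \eqref{idemm} into \eqref{idemm1}. Your reading of the sum in \eqref{idemm} as running over $\Idm(A)$ (since otherwise it would contradict \Cref{th4}) correctly identifies the typo in the stated index set and agrees with the paper's intended argument.
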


\begin{proof}
Since $\chi_0(t)=t^n$, \eqref{polynom} follows from \eqref{idemm}. Next, since $1\in \sigma(c)$ for all idempotents $c$, one can factorize
$
\chi_c(t)=(t-1)\widetilde{\chi}_c(t)
$
so that \eqref{idemm} yields \eqref{idemm1}.
\end{proof}

\begin{remark}\label{rem:num}
Some remarks concerning the number of independent syzygies is in order. Note first that we do not study this question in details because it requires a more careful analysis even in the generic case. Formally, it may be thought that the number of syzygies $S$ is the degree of the polynomial identity in \eqref{polynom} minus the tautological identity obtained when $t=\half12$, i.e. $S=n-1$. This is true, for example, for generic two-dimensional algebras, see \Cref{sec:algdim2} below. In fact, the number of nontrivial syzygies is sometimes less than $n-1$, see the discussion of unital algebras in the next section. In fact, any a priori assumption on the algebra structure such as the existence of unity, an algebra identity etc, of course, decreases the number of possible (`extra') syzygies defined by \eqref{polynom} or \eqref{idemm1}. This question deserves a separate study.
\end{remark}

\subsection{Syzygies in unital generic algebras}\label{sec:unital}
Let us consider a unital commutative algebra $A$. Then the unit $e$ is also a (nonzero) idempotent. In fact, as we shall see below, the existence of a unit decrease the number of nontrivial syzygies. This follows from the fact that the spectrum of each idempotent in a unital algebra is partially prescribed. Indeed, first note that there is a natural involution map on the set of idempotents in the algebra $A$: $
\bar c:=e-c$ is an idempotent if and only $c$ is (the idempotent $\bar c$ is called the conjugate to $c$). Then
$$
c\bar c=c(e-c)=c-c=0,
$$
i.e. each  nontrivial (i.e. distinct from the unit and the zero elements) idempotent has at least the eigenvalues $1$ and $0$ in its spectrum:
\begin{equation}\label{01}
\{0,1\}\subset \sigma(c).
\end{equation}
Furthermore, if $\dim A=n$ then the corresponding  characteristic polynomials obviously related as follows:
\begin{equation}\label{conjue}
\chi_{\bar c}(t)=(-1)^n\chi_c(1-t).
\end{equation}
For example, $\chi_0(t)=t^n$ and $\chi_{e}(t)=(t-1)^n$.

Suppose now that $A$ is generic. Then it has exactly $2^n$ distinct idempotents (including the zero and the unit elements). Observe that $\bar c\ne c$ because otherwise $c=c^2=c\bar c=0$ implying $c=0$, and on the other hand, $c=\bar c=e-c=e$, a contradiction. Thus, the conjugation $c\to \bar c$ splits up the set of all idempotents $\Idm_0(A)$ into $2^{n-1}$ \textit{distinct} pairs of idempotents. Let $$
\Idm^+(A):=\{c_0=0,c_1,\ldots,c_{2^{n-1}-1}\}
$$
be set of some representees of the pairs (of course, this choice is not unique).

\begin{proposition}
\label{pro:unital}
Let $A$ be a unital generic algebra of dimension $n\ge 2$ and let $\Idm^+(A)$ be set of some representees of the pairs. Then
\begin{equation}\label{P1}
\sum_{c\in \Idm^+ (A)}\frac{\chi_c(\half12+s) +\chi_c(\half12-s)}{\chi_c(\half12)}=2^n.
\end{equation}
\end{proposition}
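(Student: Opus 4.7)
The plan is to derive \eqref{P1} directly from the symmetric syzygy \eqref{polynom} of \Cref{th4} by pairing each idempotent with its conjugate $\bar c = e - c$. First I would substitute $t = \half12 + s$ into \eqref{polynom} to get
$$
\sum_{c\in \Idm_0(A)} \frac{\chi_c(\half12 + s)}{\chi_c(\half12)} = 2^n.
$$
Because $A$ is generic, $|\Idm_0(A)| = 2^n$, and the involution $c \mapsto \bar c$ has no fixed points (as argued just before the statement of the proposition, the existence of a fixed point would force $c=0$ and $c=e$ simultaneously). Hence the $2^n$ idempotents split into exactly $2^{n-1}$ conjugate pairs, and $\Idm^+(A)$ selects one representative from each pair (the zero idempotent $c_0 = 0$ being paired with the unit $e$).

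Next I would invoke the conjugation identity \eqref{conjue}, $\chi_{\bar c}(t) = (-1)^n \chi_c(1-t)$. Evaluating at $t = \half12$ yields $\chi_{\bar c}(\half12) = (-1)^n \chi_c(\half12)$, and evaluating at $t = \half12 + s$ yields $\chi_{\bar c}(\half12 + s) = (-1)^n \chi_c(\half12 - s)$, since $1 - (\half12 + s) = \half12 - s$. The sign $(-1)^n$ cancels in the quotient, producing
$$
\frac{\chi_{\bar c}(\half12 + s)}{\chi_{\bar c}(\half12)} = \frac{\chi_c(\half12 - s)}{\chi_c(\half12)}.
$$

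Combining the contribution of $c$ and $\bar c$ in each pair gives the symmetrized numerator $\chi_c(\half12 + s) + \chi_c(\half12 - s)$ over the common denominator $\chi_c(\half12)$, so summing over the $2^{n-1}$ representatives in $\Idm^+(A)$ reconstructs the full sum over $\Idm_0(A)$ displayed above, which equals $2^n$. This is \eqref{P1}.

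I do not foresee a serious obstacle: the proposition is a pure symmetrization of \Cref{th4} via the $c \leftrightarrow \bar c$ involution that the unital hypothesis makes available, and the only algebraic ingredient is the elementary identity \eqref{conjue}. The two items worth double-checking are that the pairing really covers all of $\Idm_0(A)$ (which is exactly the fixed-point-free property of conjugation just recalled, together with the cardinality count $2^n$ from genericity) and that every denominator $\chi_c(\half12)$ is nonzero, which follows from \Cref{the:12} since $A$ is generic.
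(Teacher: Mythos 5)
Your proof is correct and follows essentially the same route as the paper: both apply the principal syzygy \eqref{polynom} at $t=\half12+s$ and use the conjugation identity \eqref{conjue} to pair each idempotent $c$ with $\bar c=e-c$, the sign $(-1)^n$ cancelling in the quotient so that each pair contributes $\bigl(\chi_c(\half12+s)+\chi_c(\half12-s)\bigr)/\chi_c(\half12)$. Your added checks (fixed-point-freeness of the involution and nonvanishing of $\chi_c(\half12)$ via \Cref{the:12}) are exactly the points the paper relies on implicitly.
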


\begin{proof} We have from \eqref{conjue} for any $c\in \Idm^+(A)$ that
\begin{equation}\label{conjueS}
\chi_{\bar c}(\half12+s)=(-1)^n\chi_c(\half12-s),
\end{equation}
and therefore
$$
\chi_{\bar c}(\half12)=(-1)^n\chi_{c}(\half12).
$$
Therefore
\begin{equation*}
\begin{split}
P_A(s)&:=-2^n+\sum_{c\in \Idm_0 (A)}\frac{\chi_c(t)}{\chi_c(\half12)}\\
&\,\,=
-2^n+\sum_{c\in \Idm^+ (A)}\frac{1}{\chi_c(\half12)}(\chi_c(\half12+s) +\chi_c(\half12-s))
\end{split}
\end{equation*}
is an \textit{even} polynomial. Also, according to \eqref{polynom}  $P(s)\equiv 0$, as desired.
\end{proof}

Below we consider some applications for small dimensions.
First suppose that $n=2$. Then $\Idm^+ (A)=\{c\}$, where $\Idm_0(A)=\{0,e,c,\bar c\}$, and $\chi_c(t)=(t-1)(t-\lambda)$, i.e. $\chi_c(\half12+s)=(s-\half12)(s+\half12-\lambda)$. Applying \eqref{P1} yields
$$
\frac{\chi_c(\half12-s)+ \chi_c(\half12+s)}{\chi_{c}(\half12)}=4- \frac{\chi_0(\half12-s)+ \chi_0(\half12+s)}{\chi_{0}(\half12)}\equiv 2(1-4s^2),
$$
therefore $8s^2\lambda=0$, hence $\lambda=0$ and $\chi_c(t)=(t-1)t$. The latter conclusion can also easily be derived directly from \eqref{01}.

Next consider the case $n=3$. Then $\Idm^+ (A)=\{0,c_1,c_2,c_3\}$ and  \eqref{P1} yields
\begin{equation}\label{halfa3}
\sum_{i=1}^3\frac{\chi_{c_i}(\half12-s)+ \chi_{c_i}(\half12+s)}{\chi_{c_i}(\half12)}=8- \frac{\chi_0(\half12-s)+ \chi_0(\half12+s)}{\chi_{0}(\half12)}\equiv 6(1-4s^2).
\end{equation}
Since $n=3$ and \eqref{01}, the characteristic polynomials of $c_i$ is $\chi_{c_i}(t)=t(t-1)(t-\alpha_i)$, $i=1,2,3$, $\alpha_i\in \mathbb{C}$. An easy analysis shows that \eqref{halfa3} holds identically.

This shows that  \textit{there are no nontrivial  syzygies (on the eigenvalues) in a 3-dimensional unital algebra}.
See also \eqref{Matsuo1} below an example of a 3-dimensional Matsuo algebra whose algebra spectrum is $\sigma(A)=\{0,1,\alpha,1-\alpha\}$,  $\alpha\in \mathbb{C}$.

Finally, consider $n=4$. Then $\Idm^+ (A)=\{0,c_1,c_2,c_3,c_4,c_5,c_6,c_7\}$ and $\chi_{c_i}(t)=t(t-1)(t-\alpha_i)(t-\beta_i)$, $1\le i\le 7$, $\alpha_i,\beta_i\in \mathbb{C}$, hence \eqref{P1} yields
\begin{equation}\label{halfa4}
\sum_{i=1}^7\frac{\chi_{c_i}(\half12-s)+ \chi_{c_i}(\half12+s)}{\chi_{c_i}(\half12)}= 2(1-4s^2)(4s^2+7).
\end{equation}
Note that the left hand side \eqref{halfa4} is an even degree polynomial, therefore \eqref{halfa4} implies totally three identities. But \eqref{halfa4} is satisfied identically for $s=\half12$ and $s=0$. Therefore there exists  only one nontrivial syzygy. One can, for example, equate the coefficients of $s^4$ in \eqref{halfa4}, which yields
\begin{equation}\label{half41}
\sum_{i=1}^7\frac{1}{(\half12-\alpha_i)(\half12-\beta_i)}=4.
\end{equation}

\subsection{Spectral theory of two-dimensional algebras}\label{sec:algdim2}
The algebras in dimension two are well-understood and classified, see for example \cite{Walcher1}, \cite{Peter2000}, \cite{PeterSch}, \cite{Diet05}, \cite{Darpo06}. Below we revisit two-dimensional commutative algebras with  emphasis on the idempotent and syzygies aspects. Our main goal is to show that in the two-dimensional case there exists essentially one nontrivial syzygy which agrees perfectly with \Cref{rem:num} above.

We want to point out that in that case it is possible to derive  the principal syzygy \eqref{polynom} by pure algebraic argument, without resorting to Bez\'out's theorem, and a filed $\field$ of any characteristic (non necessarily algebraically closed).

\begin{proposition}
\label{pro:nil}
If $A$ is a nonassociative commutative algebra over a field $\field $,  $\dim_\field  A=2$. Let $c_1\ne c_2$ and $c_i\in \Idm(A)$. Then either of the following holds:
\begin{itemize}
\item[(i)]
$\half12\in\sigma(c_i)$ for some $i=1,2$;
\item[(ii)]
there exists exactly three distinct nonzero idempotents in $A$;
\item[(iii)] there exists exactly two distinct nonzero idempotents $c_1$ and $c_2$ and a nonzero $2$-nilpotent;
\end{itemize}
\end{proposition}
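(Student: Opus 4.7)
The plan is to work directly in the basis provided by the two idempotents themselves. I would first note that two distinct nonzero idempotents are automatically linearly independent: if $c_2=\lambda c_1$ with $\lambda\ne 0$, then $c_2=c_2^2=\lambda^2 c_1$ forces $\lambda=1$, contradicting $c_1\ne c_2$. So $\{c_1,c_2\}$ is a basis of $A$, and the entire multiplication table is encoded in the single product $c_1c_2=\alpha c_1+\beta c_2$ with $\alpha,\beta\in\field$. Reading off the matrices of $L_{c_1}$ and $L_{c_2}$ in this basis immediately yields
$$
\sigma(c_1)=\{1,\beta\},\qquad \sigma(c_2)=\{1,\alpha\},
$$
so conclusion (i) is equivalent to $\alpha=\half12$ or $\beta=\half12$.

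Next, I would translate $x^2=x$ for $x=uc_1+vc_2$ into the pair of factored quadratics
$$
u(u+2\alpha v-1)=0,\qquad v(v+2\beta u-1)=0,
$$
and enumerate the four branches determined by which factors vanish. Three branches recover $0,c_1,c_2$; the fourth is the $2\times 2$ linear system $u+2\alpha v=1$, $v+2\beta u=1$ with determinant $1-4\alpha\beta$. Performing the parallel reduction for $x^2=0$ gives $u(u+2\alpha v)=0$, $v(v+2\beta u)=0$, whose only nontrivial branch forces $u=-2\alpha v$ and then $v^2(1-4\alpha\beta)=0$. Thus a single quantity, $1-4\alpha\beta$, simultaneously controls the existence of a fourth idempotent and of a nonzero $2$-nilpotent.

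Assuming (i) fails, i.e.\ $\alpha\ne\half12$ and $\beta\ne\half12$, the proof now branches on $1-4\alpha\beta$. If $1-4\alpha\beta\ne 0$, the linear system has the unique solution
$$
(u,v)=\Bigl(\tfrac{1-2\alpha}{1-4\alpha\beta},\tfrac{1-2\beta}{1-4\alpha\beta}\Bigr),
$$
which under the standing hypothesis is distinct from $(0,0),(1,0),(0,1)$, giving a genuine third nonzero idempotent; in the same regime the nilpotent equation admits only $x=0$, so we land in (ii). If instead $1-4\alpha\beta=0$, the linear system is inconsistent (it would force $1-2\beta=0$), so $c_1,c_2$ remain the only nonzero idempotents, while the nilpotent equation now admits the one-parameter family $x=v(-2\alpha c_1+c_2)$, producing nonzero $2$-nilpotents; this is (iii).

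The main obstacle I foresee is purely bookkeeping in case (ii): one must verify that the candidate fourth idempotent really is distinct from $0,c_1,c_2$, which is exactly what the assumptions $\alpha\ne\half12$, $\beta\ne\half12$ guarantee, and also that no further idempotents can appear outside the four enumerated branches, which follows since the branching is exhaustive. Apart from this, the argument is purely linear-algebraic and works over any field of characteristic different from $2$ (needed for $\half12$ to be well-defined), in particular without invoking B\'ezout's theorem.
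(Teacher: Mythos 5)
Your proof is correct and follows essentially the same route as the paper: take $\{c_1,c_2\}$ as a basis, encode the multiplication in $c_1c_2=\alpha c_1+\beta c_2$ so that $\sigma(c_1)=\{1,\beta\}$, $\sigma(c_2)=\{1,\alpha\}$, and split on whether $1-4\alpha\beta$ vanishes to produce either the third idempotent or the $2$-nilpotent $c_1-\frac{1}{2\alpha}c_2$ (up to scaling, your $-2\alpha c_1+c_2$). Your exhaustive branch enumeration for $x^2=x$ and $x^2=0$ is just a slightly more systematic write-up of the same computation, so no substantive difference.
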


\begin{proof}
Since $c_1\ne c_2$, they are linearly independent, thus form a basis in $A$. Write $c_1c_2=\alpha c_1+\beta c_2$, $\alpha, \beta\in \field$. Then $\sigma(c_1)=\{1,\beta\}$ and $\sigma(c_2)=\{1,\alpha\}$.

Let us assume that (i) does not hold, i.e. $\alpha,\beta\ne \half12$.
Consider $u=xc_1+y c_2$, $x,y\in \field$. Then
\begin{equation}\label{ue}
u^2=x(x+2\alpha y) c_1+y(2\beta x+y)c_2.
\end{equation}
If the determinant
$
\left|
  \begin{array}{cc}
    1 & 2\alpha \\
    2\beta & 1 \\
  \end{array}
\right|=1-4\alpha\beta= 0
$
then $\beta=\frac{1}{4\alpha}$, hence $c_1c_2=\alpha c_1+\frac{1}{4\alpha} c_2$, therefore
$$
(c_1-\half{1}{2\alpha}c_2)^2=0.
$$
Therefore $0\ne c_1-\half{1}{2\alpha}c_2\in \Nil(A)$. Let us show that there are not other idempotents in $A$ except for  $c_1$ and $c_2$. Indeed, if $u$ is such an idempotent then $xy\ne 0$ (otherwise  $u=c_1$ or $u=c_2$), hence \eqref{ue} yields
\begin{equation}\label{syss}
\left\{
\begin{array}{rl}
x+2\alpha y&=1\\
2\beta x+y&=1
\end{array}
\right.
\end{equation}
thus, using $\beta=\frac{1}{4\alpha}$ implies $2\alpha=1$, a contradiction. Therefore one comes to (iii).

If the determinant is nonzero: $\Delta:=1-4\alpha\beta\ne0$ then there exist a solution
$$
(x,y)=(\frac{1-2\alpha}{\Delta}, \frac{1-2\beta}{\Delta})
$$ of \eqref{syss}
which implies $u\in A$ such that $u^2=u$. Note that by the assumption$(1-2\alpha)(1-2\beta)\ne0$, hence $xy\ne 0$, i.e.   $u$ is  distinct from $c_1,c_2$. Therefore, we have three distinct idempotents, i.e. (ii). It also follows that in that case there exists exactly three idempotents.
\end{proof}

Now, let us consider the case of a \textit{generic} algebra of dimension 2.

\begin{theorem}\label{th:3lam}
Let $A$ be a nonassociative commutative algebra over a field $\field $ with $\dim_\field  A=2$. Suppose that there exists three distinct nonzero idempotents $c_i$, $i=1,2,3$. Then
\begin{equation}\label{syzygies}
4\lambda_1\lambda_2\lambda_3- \lambda_1-\lambda_2-\lambda_3+1=0,
\end{equation}
where $\sigma(c_i)=\{\lambda_i\}$.
\end{theorem}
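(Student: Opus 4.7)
My plan is to apply the principal syzygy of \Cref{th4} with $n=2$ and extract the stated identity by matching coefficients of a single polynomial variable. Since the theorem is stated in the context of ``the case of a generic algebra of dimension $2$'', I take $A$ to be generic; then by \Cref{pro:nil} (case (ii)) combined with \Cref{the:12}, one has $\Idm_0(A)=\{0,c_1,c_2,c_3\}$ and $\half12\notin\sigma(A)$, so all denominators below are nonzero.

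The first step is substitution. Using $\chi_0(t)=t^2$ (so $\chi_0(\half12)=\tfrac14$) and, since $1\in\sigma(c_i)$ with $\dim A=2$, the factorization $\chi_{c_i}(t)=(t-1)(t-\lambda_i)$ (so $\chi_{c_i}(\half12)=(2\lambda_i-1)/4$), the principal syzygy becomes
\[
t^2+\sum_{i=1}^{3}\frac{(t-1)(t-\lambda_i)}{2\lambda_i-1}=1 \qquad\forall\, t.
\]
The coefficient of $t^2$ on the left is $1+\sum_i(2\lambda_i-1)^{-1}$, and its vanishing gives the scalar identity $\sum_{i=1}^{3}(2\lambda_i-1)^{-1}=-1$. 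A quick check using $\lambda_i=\half12(2\lambda_i-1)+\half12$ and $1+\lambda_i=\half12(2\lambda_i-1)+\half32$ shows that the coefficients of $t$ and of the constant reduce to this same identity, consistent with \Cref{rem:num} predicting only one nontrivial syzygy in this case.

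The second step is to clear denominators: multiplying through by $\prod_i(2\lambda_i-1)$ yields
\[
\sum_{i<j}(2\lambda_i-1)(2\lambda_j-1)+\prod_i(2\lambda_i-1)=0,
\]
and expanding in the elementary symmetric polynomials $e_k$ of $\lambda_1,\lambda_2,\lambda_3$ gives $(4e_2-4e_1+3)+(8e_3-4e_2+2e_1-1)=8e_3-2e_1+2=0$, equivalent to the claimed identity $4\lambda_1\lambda_2\lambda_3-\lambda_1-\lambda_2-\lambda_3+1=0$. The only place where care is required is this symmetric-polynomial bookkeeping, which is mechanical but easy to miscount.

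As the paper remarks, one can alternatively bypass \Cref{th4} (and Bezout) entirely: take $\{c_1,c_2\}$ as a basis and write $c_1c_2=\alpha c_1+\beta c_2$, so that $\lambda_1=\beta$ and $\lambda_2=\alpha$; by the proof of \Cref{pro:nil}, $c_3=((1-2\alpha)c_1+(1-2\beta)c_2)/(1-4\alpha\beta)$, and a direct $2\times 2$ determinant calculation for $L_{c_3}$ yields $\lambda_3=(1-\alpha-\beta)/(1-4\alpha\beta)$. Substituting these expressions into the target identity reduces it to $-(1-\alpha-\beta)+(1-\alpha-\beta)=0$, which is valid over any field of any characteristic, matching the remark preceding the statement.
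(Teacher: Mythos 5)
There is a genuine gap: you only prove the identity for \emph{generic} $A$, whereas the statement (and the paper's own proof) assumes nothing beyond the existence of three distinct nonzero idempotents, over an arbitrary field $\field$ and without excluding $\half12$ from the spectrum. Your reduction to genericity via \Cref{pro:nil} and \Cref{the:12} is circular: the alternatives in \Cref{pro:nil} are not exclusive, and having three distinct nonzero idempotents does not place you in case (ii). Concretely, if $c_1c_2=\half12(c_1+c_2)$, then every $xc_1+(1-x)c_2$ is an idempotent, so there are three (indeed infinitely many) distinct nonzero idempotents, $\half12\in\sigma(c_1)\cap\sigma(c_2)$, $A$ is not generic, and the denominators $\chi_c(\half12)$ in \Cref{th4} vanish --- yet \eqref{syzygies} still has to be verified (it holds because $\lambda_1=\lambda_2=\half12$ annihilates the expression for every $\lambda_3$). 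The paper's proof handles exactly this case separately (the branch $\lambda_1=\half12$, which forces $\lambda_2=\half12$); your argument never does. Moreover, \Cref{th4} rests on the Euler--Jacobi formula over $\mathbb{C}$, so your main route cannot deliver the statement over an arbitrary field of arbitrary characteristic, which is precisely the point the paper makes just before the theorem (a purely algebraic proof without B\'ezout).

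Your closing ``alternative'' is essentially the paper's own argument (it is the computation underlying \Cref{pro:nil}), and its algebra is correct: with $c_1c_2=\alpha c_1+\beta c_2$ one has $\lambda_1=\beta$, $\lambda_2=\alpha$, $\lambda_3=(1-\alpha-\beta)/(1-4\alpha\beta)$, and substitution gives \eqref{syzygies}. But it, too, silently assumes $1-4\alpha\beta\ne0$, and under the theorem's hypotheses the degenerate case $1-4\alpha\beta=0$ occurs exactly when $\alpha=\beta=\half12$ --- the same case missing from your main route. Your generic-case derivation from \Cref{th4} (the coefficient matching and the symmetric-polynomial bookkeeping are correct, and it agrees with the paper's later observation that \eqref{syzygies} is equivalent to \eqref{lambda0}) is a legitimate alternative \emph{in the generic situation}; to prove the theorem as stated you must add the short separate treatment of $\lambda_1=\lambda_2=\half12$ and, if you want the full field generality, argue directly as in the paper rather than through \Cref{th4}.
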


\begin{proof}
Any pair of distinct idempotents is a basis of $A$. Decompose $c_ic_j=xc_i+yc_j$ for $i\ne j$. Then $\sigma(c_i)=\{1,y\}$ and $\sigma(c_j)=\{1,x\}$. This yields
\begin{align}
c_1c_2=\lambda_{2}c_1+\lambda_{1}c_2,\label{c12}\\
c_2c_3=\lambda_{3}c_2+\lambda_{2}c_3,\label{c23}\\
c_3c_1=\lambda_{1}c_3+\lambda_{3}c_1,\label{c31}
\end{align}
for some $\lambda_i\in \field$. In particular, this implies that  each idempotent is semi-simple and $\sigma(c_i)=\{1,\lambda_i\}$, $i=1,2,3$.

Next, by our assumption $c_1,c_2,c_3$ are distinct, hence
\begin{equation}\label{c123}
c_3=a_1c_1+a_2c_2,\qquad a_1,a_2\in \field,\,\, a_1a_2\ne 0,
\end{equation}
hence substituting the latter identity in \eqref{c31}
$$
(a_1c_1+a_2c_2)c_1=\lambda_{1}(a_1c_1+a_2c_2)+\lambda_{3}c_1,
$$
 one finds by virtue of \eqref{c12} that
$
(a_1(1-\lambda_1)+a_2\lambda_2-\lambda_{3})c_1=0
$
implying
\begin{equation}\label{lam1}
\lambda_{3} =(1-\lambda_1)a_1+\lambda_2 a_2.
\end{equation}
Arguing similarly with \eqref{c23} one arrives at
\begin{equation}\label{lam2}
\lambda_{3} =\lambda_1 a_1+(1-\lambda_2)a_2.
\end{equation}
This yields
$$
a_1(1-2\lambda_1)=a_2(1-2\lambda_2).
$$
Let first $\lambda_1=\frac12$. Then by the assumption $a_2\ne0$, hence $\lambda_2=\frac12$, which obviously satisfies \eqref{syzygies} for any $\lambda_3$. Next, if $\lambda_1\ne\frac12$ then $\lambda_2\ne\frac12$ and we have
\begin{equation}\label{ratio}
\frac{a_2}{a_1}=\frac{1-2\lambda_1}{1-2\lambda_2}.
\end{equation}
On the other hand, rewriting \eqref{c123} as
$$
c_1=-\frac{1}{a_1}c_3+\frac{a_2}{a_1}c_2= b_3c_3+b_2c_2,
$$
we obtain for symmetry reasons that
$
-a_2=\frac{b_2}{b_3}=\frac{1-2\lambda_3}{1-2\lambda_2},
$
hence from \eqref{ratio} we also have
$
-a_1=\frac{1-2\lambda_3}{1-2\lambda_1}.
$
Summing up \eqref{lam1} and \eqref{lam2} we obtain
\begin{equation}\label{sysygies2}
2\lambda_3=a_1+a_2=-(1-2\lambda_3)\left( \frac{1}{1-2\lambda_1}+\frac{1}{1-2\lambda_2}\right)
\end{equation}
which readily yields \eqref{syzygies}. The proposition follows.
\end{proof}

\begin{remark}
For any triple $\lambda_1,\lambda_2,\lambda_3$ satisfying \eqref{syzygies}, it is easy to construct an algebra with three idempotents having the spectrum $\sigma(c_i)=\{1,\lambda_i\}$. The relation \eqref{syzygies} (as well as \eqref{lambda0} below) appears in the classification of rank three algebras by S.~Walcher, see \cite[p.~3407]{Walcher1}.
\end{remark}

Now we show that  \eqref{syzygies} is actually equivalent to the principal syzygies \eqref{EuJa1}. To this end, note that if $\lambda_i\ne\half12$ then \eqref{sysygies2} yields also another form of \eqref{syzygies}, namely
\begin{equation}\label{syzygies0}
\frac{1}{1-2\lambda_1}+\frac{1}{1-2\lambda_2}+\frac{1}{1-2\lambda_3}=1.
\end{equation}

Taking into account that $\chi_{c_0}(t)=t^2$, where $c_0=0$ is the trivial idempotent, the latter equation can be written as
\begin{equation}\label{lambda0}
\frac{1}{{\chi}_{c_0}(\half12)}+
\frac{1}{{\chi}_{c_1}(\half12)}+
\frac{1}{{\chi}_{c_2}(\half12)}+
\frac{1}{{\chi}_{c_3}(\half12)}=0.
\end{equation}
This yields the syzygy in \eqref{EuJa1} for $k=n=2$. Another (the last for $n=2$) possibility is $k=1$ when  \eqref{EuJa1} becomes
\begin{equation}\label{lambda1}
\sum_{i=0}^3\frac{{\chi'}_{c_i}(\half12)}{{\chi}_{c_i}(\half12)}=0.
\end{equation}
Since ${\chi'}_{c_i}(\half12)=-\lambda_i$ for $i\ne 0$ and ${\chi'}_{c_0}(\half12)=1$, one readily verifies that \eqref{lambda1} is in fact equivalent to \eqref{lambda0}.

We have two further corollaries of \eqref{lambda0}.

\begin{corollary}
\label{cor:1}
Let $A$ be a nonassociative commutative algebra over a field $\field $, $\dim_\field  A=2$, and let $c_i$, $i=1,2,3$ be three nonzero idempotents. If $\frac12\in\sigma(c_i)$ for some $i$ then at least one of the remained idempotents has the same property.
\end{corollary}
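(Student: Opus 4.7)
My plan is to apply the principal syzygy established in Theorem \ref{th:3lam} directly. Writing $\sigma(c_i)=\{1,\lambda_i\}$ for $i=1,2,3$, that theorem gives
\begin{equation*}
4\lambda_1\lambda_2\lambda_3-\lambda_1-\lambda_2-\lambda_3+1=0.
\end{equation*}
Since the characteristic is $0$ (and in particular $1\neq\half12$), the hypothesis $\half12\in\sigma(c_j)$ for some $j$ forces $\lambda_j=\half12$; without loss of generality I take $j=1$.

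Substituting $\lambda_1=\half12$ and multiplying through by $2$, the syzygy becomes
\begin{equation*}
4\lambda_2\lambda_3-2\lambda_2-2\lambda_3+1=0,
\end{equation*}
which factors as $(2\lambda_2-1)(2\lambda_3-1)=0$. Thus at least one of $\lambda_2,\lambda_3$ equals $\half12$, which is precisely the conclusion.

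There is no genuine obstacle here, only the small verification that Theorem \ref{th:3lam} applies: the hypotheses of the corollary (three distinct nonzero idempotents in a two-dimensional commutative algebra) match those of Theorem \ref{th:3lam} verbatim, so the syzygy is available and the factorization does the rest. The only structural point worth noting is \emph{why} such a clean factorization appears — it is the reflection, at the level of the eigenvalue symmetric function $4\lambda_1\lambda_2\lambda_3-\lambda_1-\lambda_2-\lambda_3+1$, of the fact that $\half12$ is the distinguished exceptional eigenvalue forcing non-genericity (cf.\ Theorem \ref{the:12}): specialising one variable to $\half12$ makes the expression degenerate in the other two variables jointly, producing the product structure $(2\lambda_2-1)(2\lambda_3-1)$.
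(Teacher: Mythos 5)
Your proof is correct and follows essentially the same route as the paper: both apply the syzygy \eqref{syzygies} from \Cref{th:3lam} and observe that specializing one eigenvalue to $\half12$ makes the left-hand side factor as a constant times $(2\lambda_2-1)(2\lambda_3-1)$, forcing another $\lambda_j=\half12$. The only cosmetic difference is that the paper phrases the factorization as the loss of irreducibility of $S(\lambda_1,\lambda_2,\lambda_3)$ upon the substitution, which is exactly the structural point you note at the end.
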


\begin{proof}
Indeed, let $S(\lambda_1,\lambda_2,\lambda_3)$ denote the left hand side of \eqref{syzygies}. Then $S$ is irreducible in $\mathbb{C}[\lambda_1,\lambda_2,\lambda_3]$. But  if $\lambda_i=\frac12$ for some $i$, say $\lambda_3=\frac12$ then $S$ factorizes as follows:
$$
S(\lambda_1,\lambda_2,\frac12)=
\frac12(2\lambda_1-1)(2\lambda_2-1)
$$
This yields the desired conclusion.
\end{proof}

\begin{corollary}
 Given two linearly independent idempotents $e_1,e_2$ in a two dimensional non associative algebra $A$ over a field $K$, its
 multiplication table contains explicitly the Peirce numbers of $e_1,e_2$.
\end{corollary}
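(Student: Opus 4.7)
The plan is to observe that, once $e_1$ and $e_2$ are taken as a basis of $A$, the only freedom in the multiplication table lies in the single product $e_1 e_2$, and that its coefficients in this basis are precisely the nontrivial Peirce numbers of $e_1$ and $e_2$. This essentially repeats the opening observation in the proof of \Cref{th:3lam}, but stripped of the three-idempotent hypothesis.

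More concretely, first I would note that since $\dim_K A = 2$ and $e_1, e_2$ are linearly independent, $\{e_1, e_2\}$ is a basis, so commutativity together with $e_1^2 = e_1$ and $e_2^2 = e_2$ forces the multiplication table to have the form
\begin{equation*}
e_1^2 = e_1, \qquad e_2^2 = e_2, \qquad e_1 e_2 = \alpha e_1 + \beta e_2
\end{equation*}
for uniquely determined scalars $\alpha, \beta \in K$.

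Next I would compute the matrices of $L_{e_1}$ and $L_{e_2}$ in the basis $\{e_1, e_2\}$. From $L_{e_1}(e_1) = e_1$ and $L_{e_1}(e_2) = \alpha e_1 + \beta e_2$, the operator $L_{e_1}$ is upper triangular with diagonal entries $1$ and $\beta$; similarly $L_{e_2}$ is lower triangular with diagonal entries $\alpha$ and $1$. Hence
\begin{equation*}
\sigma(e_1) = \{1, \beta\}, \qquad \sigma(e_2) = \{1, \alpha\},
\end{equation*}
which shows that the coefficients $\alpha$ and $\beta$ in the expansion of $e_1 e_2$ are exactly the nontrivial Peirce numbers of $e_2$ and $e_1$, respectively. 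This is precisely the assertion of the corollary.

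There is essentially no obstacle here: the argument is a direct bookkeeping computation with two-dimensional matrices, and it avoids Bézout-type hypotheses since no claim about the total number of idempotents is being made. The only minor point to be careful about is the matching between $\alpha, \beta$ and which Peirce number belongs to which idempotent, but that is settled immediately by inspecting the triangular forms above.
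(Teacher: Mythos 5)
Your proof is correct and is essentially the paper's own argument: the paper proves this corollary by pointing to the decompositions $c_ic_j=\lambda_j c_i+\lambda_i c_j$ established at the start of the proof of \Cref{th:3lam} (equivalently \Cref{pro:nil}), where writing $e_1e_2=\alpha e_1+\beta e_2$ in the basis $\{e_1,e_2\}$ immediately gives $\sigma(e_1)=\{1,\beta\}$ and $\sigma(e_2)=\{1,\alpha\}$ via the triangular matrices of $L_{e_1},L_{e_2}$, exactly as you computed. Your only (harmless) refinement is to note explicitly that the three-idempotent hypothesis is not needed for this bookkeeping step.
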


   \begin{proof} It follows immediately from \eqref{c12}--\eqref{c31}.
   \end{proof}

\begin{remark}
Such a multiplication table structure may be recognized as a {\it diagonal}, where the spectral parameters are presented explicitly.
\end{remark}

\section{Algebras with a prescribed Peirce spectrum}\label{sec:Peirce}
The spectrum of any nonzero idempotent contains $1$. If algebra is unital then the spectrum of any nontrivial idempotent contains $0$ and $1$ (see section~\ref{sec:unital}). There are many algebras sharing a remarkable property: the spectrum of all or a `large' subset of idempotents is constant or contains some prescribed values. It is interesting to know which common properties such algebras have.

\subsection{Algebras with constant or nearly constant spectrum}
The first natural and nontrivial example which can be treated by virtue of the constructed syzygies are generic algebras with constant spectrum.

\begin{definition}
An algebra is said to have a constant spectrum if all nonzero idempotents have the same spectrum (counting multiplicities).
\end{definition}

One such family is the so-called Hsiang (or REC algebras, see \cite[sec.~6]{NTVbook}). More precisely, a Hsiang algebra $A$ is a commutative algebra over $\R{}$ with a symmetric bilinear form $\scal{}{}$ (see the definition \eqref{Qass} below) such that for any element $x\in A$  two following identities hold:
$$
\scal{x^2}{x^3}=\scal{x}{x^2}\scal{x}{x}, \quad \trace L_x=0.
$$
It can be proved that the spectrum of \textit{any} idempotent in $A$ is constant and contains only the eigenvalues $\pm1$ and $\pm\half12$ (with certain multiplicities independent on a choice of an idempotent). However, since $\half12\in \sigma(c)$, Hsiang algebras are \textit{not} generic (any Hsiang algebra contains infinitely many idempotents). Therefore, the above syzygies are not applicable  here.

Nevertheless, in two dimensions an algebra with constant spectrum can be easily constructed. Let us consider a commutative two-dimensional algebra generated by $e_1$ and $e_2$ with identities
$$
e_1^2=-e_2^2=e_1, \quad e_1e_2=e_2e_1=-e_2.
$$
A simple analysis reveals that there is exactly $4=2^2$ idempotents:
$$
c_0=0, \quad c_1=e_1, \quad \text{and}\,\,\, c_{2,3}=-\half12 e_1\pm \half{\sqrt{3}}{2} e_2.
$$
In particular, $A$ is a generic algebra over $\R{}$. It is easily verified that the algebra $A$ possesses the constant spectrum property: the spectrum of any \textit{nonzero} idempotent is the same:
$$
\sigma(c_i)=\{1,-1\}, \quad i=1,2,3.
$$
One can readily prove that in dimension 2 any commutative algebra with the above property is necessarily isomorphic to $A$.

In the general case, one has the following observation.

\begin{corollary}\label{cor:same}
If  $A$ is a generic algebra, $n=\dim A$, such that all nonzero idempotents has the same spectrum then for any $c\in \Idm_0(A)$, $\chi_c(t)=(-1)^n(t^n-1)$. In other words, if $A$ is such an algebra then
\begin{equation}\label{prim}
\sigma(c)=\{e^{\frac{2\pi k\sqrt{-1}}{n}}, \quad k=0,1,2,\ldots,n-1\}
\end{equation}
for any idempotent $c\ne 0$.
\end{corollary}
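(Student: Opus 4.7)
My plan is to apply the principal syzygy from \Cref{th4} (or equivalently its reformulation \eqref{idemm} in \Cref{cor:n-1}) directly. Since $A$ is generic, $|\Idm_0(A)| = 2^n$, and by hypothesis all $2^n - 1$ nonzero idempotents share a common characteristic polynomial, say $p(t) = \chi_c(t)$. Factoring out this constant polynomial from the sum reduces the syzygy to a single scalar equation determining $p(t)$.

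First I would compute the $c = 0$ contribution separately: since $L_0 = 0$, we have $\chi_0(t) = (-t)^n$ and hence
$$
\frac{\chi_0(t)}{\chi_0(\tfrac12)} = \frac{(-t)^n}{(-1/2)^n} = 2^n t^n.
$$
Substituting into \eqref{polynom} gives
$$
2^n t^n + (2^n - 1)\,\frac{p(t)}{p(\tfrac12)} = 2^n,
$$
so
$$
\frac{p(t)}{p(\tfrac12)} = \frac{2^n(1 - t^n)}{2^n - 1}.
$$

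Next I would pin down the constant $p(\tfrac12)$ by comparing leading coefficients. Since $p(t) = \det(L_c - tI)$ has leading coefficient $(-1)^n$, matching the coefficient of $t^n$ on both sides yields $p(\tfrac12) = (-1)^{n+1}(2^n - 1)/2^n$. Plugging this back produces
$$
p(t) = (-1)^n(t^n - 1),
$$
whose roots are precisely the $n$-th roots of unity, giving \eqref{prim}.

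I do not anticipate any real obstacle: the hypothesis of constant spectrum makes $p(t)$ independent of $c$, so the syzygy collapses from $2^n - 1$ unknowns into a single polynomial identity, which is then solved directly. The only care needed is the evaluation of the $c = 0$ term and the leading-coefficient normalization; as a sanity check, $t = 1$ is a root of $t^n - 1$, consistent with $1 \in \sigma(c)$ for every nonzero idempotent.
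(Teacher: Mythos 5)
Your proof is correct and takes essentially the same route as the paper: separating the $c=0$ term in \eqref{polynom} is exactly the content of \eqref{idemm} in \Cref{cor:n-1}, which the paper's proof invokes to conclude that the common polynomial satisfies $\chi_c(t)/\chi_c(\tfrac12)=2^n(1-t^n)/(2^n-1)$. Your leading-coefficient determination of $p(\tfrac12)$ just makes explicit the final step the paper leaves as ``follows immediately,'' and it is carried out correctly.
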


\begin{proof}
Since the characteristic polynomial $\chi_c(t)$ is the same for all nonzero idempotents $c$ then using \eqref{idemm} we obtain that
$$
\frac{\chi_c(t)}{\chi_c(\half12)}=\frac{2^n(1-t^n)}{2^n-1}
$$
and the desired conclusion follows immediately from the last identity.
\end{proof}

It is interesting to know whether an algebra satisfying the conditions of \Cref{cor:same} realizable for any $n\ge 3$. The following example shows that this holds at least for $n=3$.

\begin{example}
 Let us consider a three dimensional commutative algebra over $\mathbb{C}$ generated by \textit{idempotents} $c_1, c_2,c_3$ with the multiplication table
\begin{equation*}
  c_ic_j=\alpha c_i+\beta c_j+\gamma c_k, \qquad \text {where $(i,j,k)$  is a \textit{cyclic} permutation of  $(1,2,3)$},
\end{equation*}
with the structure constants $$\alpha,\beta=-\half12\pm\half14\sqrt{-6+2\sqrt{-7}},\quad \gamma=\half14-\half14 \sqrt{-7}$$
The algebra $A$ is generic because, except for the basis idempotents $c_i$, there exists exactly $4=7-3$ nonzero idempotents, namely
\begin{align*}
  c_4&=-\gamma(c_1+c_2+c_3)\\
  c_5&=(\gamma-1)c_1-\gamma c_2+\gamma c_3,\\
  c_6&=\gamma c_1+(\gamma-1)c_2-\gamma c_3,\\
  c_7&=-\gamma c_1+\gamma c_2+(\gamma-1)c_3.
\end{align*}
A straightforward verification shows that all idempotents have the same spectrum
$$
\sigma(c_i)=\{1,\half12(-1\pm\sqrt{-3}\}, \quad \forall c_i\in \Idm_0(A).
$$
Note that the spectrum is constant and its elements are exactly the three roots of $z^3-1=0$ given in \eqref{prim}. Furthermore, it can be seen the validity of syzygies \eqref{EuJa2}:
\begin{equation*}
  \sum_{i=1}^{7} c_i=0.
\end{equation*}
One can prove that any three-dimensional generic algebra satisfying conditions of \Cref{cor:same} is necessarily isomorphic to the above algebra.
\end{example}

\begin{remark}
Dropping the requirement that the algebra $A$ is generic, yields many other families of nonassociative  algebras with constant spectrum. See for example an algebra with finitely many idempotents given in \Cref{sec:Hscub} below.
\end{remark}

Let us relax the constant spectrum property and consider a generic  algebra such that all idempotents has a common value in the spectrum.

\begin{corollary}\label{cor:common}
Let $A$ be a generic algebra. If $\alpha\in \sigma(c)$ for all nonzero $c\in \Idm_0(A)$ then $\alpha^n=1$ and $\alpha\ne 1$.
\end{corollary}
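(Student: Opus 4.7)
The plan is to substitute $t=\alpha$ into one of the principal syzygies already at our disposal and exploit the hypothesis, which forces $\chi_c(\alpha)=0$ for every nonzero idempotent $c$, to collapse the sum to a single explicit term.

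I would use the normalized identity \eqref{idemm1} involving $\widetilde{\chi}_c(t)=\chi_c(t)/(t-1)$, in which the omnipresent eigenvalue $1$ has been divided out. Under the assumption $\alpha\neq 1$, the hypothesis $\alpha\in\sigma(c)$ yields $\widetilde{\chi}_c(\alpha)=0$ for every $c\in\Idm(A)$, so setting $t=\alpha$ annihilates the left-hand side and leaves
$$
2^{n-1}(1+\alpha+\alpha^2+\ldots+\alpha^{n-1})=0.
$$
Recognising this geometric sum as $(\alpha^n-1)/(\alpha-1)$, permissible since $\alpha\neq 1$, immediately gives $\alpha^n=1$.

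The case $\alpha=1$ is vacuous, as every nonzero idempotent has $1$ in its Peirce spectrum automatically; the corollary really concerns only the nontrivial common Peirce numbers, and the choice of \eqref{idemm1} over \eqref{polynom} of \Cref{th4} sidesteps this cleanly by absorbing the factor $(t-1)$ into $\widetilde{\chi}_c$. A parallel route would substitute $t=\alpha$ directly into \eqref{polynom}: only the $c=0$ term survives, giving $(2\alpha)^n=2^n$ and hence $\alpha^n=1$, but this variant requires imposing $\alpha\neq 1$ by hand. I expect no real difficulty — the whole argument is essentially a one-line substitution into an identity already proved — the only delicate point being the bookkeeping around the tautological eigenvalue $1$, which the $\widetilde{\chi}_c$ formulation handles transparently.
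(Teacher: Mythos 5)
Your argument is correct and is essentially the paper's own proof: the paper likewise evaluates the syzygy \eqref{idemm1} at the common value (phrased as $t-\alpha$ dividing the right-hand side $2^{n-1}(1+t+\cdots+t^{n-1})$), which forces $1+\alpha+\cdots+\alpha^{n-1}=0$ and hence $\alpha^n=1$, $\alpha\ne 1$. Your treatment of the trivial eigenvalue also mirrors the paper, whose proof likewise only argues under the assumption $\alpha\ne 1$.
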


\begin{proof}
If $\alpha\ne1$ is a common value of the spectrum $\sigma(c)$ for all $c\ne 0$ then it is a common root of all $\widetilde{\chi}_c(t)$ in \eqref{idemm1}, hence $t-\alpha$ divides the right hand side of \eqref{idemm1}, implying the desired conclusion.
\end{proof}

\subsection{Algebras with a thin spectrum} \label{sec:3Ca}
It was already mentioned in Introduction, that many interesting examples of nonassociative algebras share another characteristic property: the spectrum of each idempotent consists  only of few distinct prescribed eigenvalues. Loosely speaking, such  an algebra has a `thin' spectrum.

As an example, let us consider the Matsuo algebra $3C_\alpha$. This is an particular example of the so-called Matsuo algebras family \cite{Matsuo05} appearing in the context of the Griess algebras, see \cite{Matsuo05}, \cite{Rehren17}. More precisely,  $A=3C_\alpha$ is the three-dimensional algebra over a field $\field$ containing $\alpha$ and $\mathrm{char}(\field)\ne 2$ spanned by three idempotents $e_1,e_2,e_3$ subject to the algebra identities
\begin{equation}\label{Matsuo1}
e_ie_j=\half{\alpha}{2}(e_i+e_j-e_k), \quad \{i,j,k\}=\{1,2,3\}.
\end{equation}
Then a simple analysis reveals that if $\alpha\ne -1$  and $\alpha\ne \half12$ then there exists exactly $7=2^3-1$ distinct nonzero idempotents, namely
\begin{align*}
e_7\,\,\,\,&=\half{1}{\alpha+1}(e_1+e_2+e_3),\\
e_{3+i}&=\bar e_i, \quad i=1,2,3,
\end{align*}
where $e_7$ is the algebra unit and $\bar c=e_7-c$ is the conjugate idempotent. In summary we have (see also \cite{Rehren15})

\begin{proposition}
If $\alpha\ne -1, \half12$ then the Matsuo algebra $3C_\alpha$ is a $3$-dimensional generic unital algebra. Its spectrum is given as follows:
\begin{align*}
\sigma(e_0)&=\{0,0,0\},\qquad  \sigma(e_7)=\{1,1,1\},\\
\sigma(e_i)&=\{0,\alpha,1\},\qquad \sigma(\bar e_i)=\{0,1-\alpha,1\},
\end{align*}
where $e_0=0$ and $i=1,2,3$.
\end{proposition}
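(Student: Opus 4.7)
The plan is to exhibit the complete list of $2^3=8$ idempotents of $3C_\alpha$ and then read off each Peirce spectrum directly from the matrix of the multiplication operator in the canonical basis $\{e_1,e_2,e_3\}$. Genericity will then follow from Definition~\ref{def1} together with B\'ezout's bound \eqref{cardinality}.

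First I would verify that $e_7 := \tfrac{1}{\alpha+1}(e_1+e_2+e_3)$ is a unit of $3C_\alpha$. Using \eqref{Matsuo1}, a direct computation gives
$$
e_7\cdot e_i = \tfrac{1}{\alpha+1}\Bigl(e_i + \tfrac{\alpha}{2}(e_i+e_j-e_k) + \tfrac{\alpha}{2}(e_i+e_k-e_j)\Bigr) = \tfrac{1+\alpha}{\alpha+1}\,e_i = e_i,
$$
which is well defined because $\alpha\ne -1$. It follows that $\bar e_i := e_7 - e_i$ are idempotents with $e_i\bar e_i=0$. A straightforward coordinate check shows that the eight elements $\{0, e_7, e_1, e_2, e_3, \bar e_1, \bar e_2, \bar e_3\}$ are pairwise distinct whenever $\alpha\ne -1$. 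Since B\'ezout's bound \eqref{cardinality} gives $\mathrm{card}\,\mathbf{P}((3C_\alpha)_{\mathbb{C}})\le 2^3=8$, the list must be exhaustive, and in particular no nonzero $2$-nilpotent can occur in the complexification; consequently $|\Idm_0((3C_\alpha)_{\mathbb{C}})|=2^3$, so $3C_\alpha$ is generic by Definition~\ref{def1}.

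For the spectra, $L_0=0$ and $L_{e_7}=I$ handle the two extremes. For $e_1$, reading the columns of $L_{e_1}$ in the basis $\{e_1,e_2,e_3\}$ off \eqref{Matsuo1} produces the matrix
$$
L_{e_1} = \begin{pmatrix} 1 & \alpha/2 & \alpha/2 \\ 0 & \alpha/2 & -\alpha/2 \\ 0 & -\alpha/2 & \alpha/2 \end{pmatrix},
$$
whose characteristic polynomial factors as $(t-1)\,t\,(t-\alpha)$, so $\sigma(e_1)=\{0,\alpha,1\}$. The $S_3$-symmetry of the defining relations yields the identical spectrum for $e_2$ and $e_3$. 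For the conjugates, the presence of the unit gives $L_{\bar e_i} = I - L_{e_i}$, so the eigenvalues shift by $\lambda\mapsto 1-\lambda$ and $\sigma(\bar e_i)=\{0,1-\alpha,1\}$.

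The main conceptual content lies in the completeness of the idempotent list: rather than attacking $u^2=u$ head-on as three quadratics in three unknowns, I leverage B\'ezout to assert that exhibiting the tight number $2^n$ of elements of $\mathbf{P}$ automatically rules out any residual idempotents or $2$-nilpotents. The role of the exclusion $\alpha\ne\tfrac12$ becomes transparent a posteriori: the displayed spectra show $\sigma(3C_\alpha)=\{0,1,\alpha,1-\alpha\}$, so $\tfrac12\in\sigma(A)$ exactly when $\alpha=\tfrac12$, and \Cref{the:12} then prohibits genericity. Conversely, the exclusion $\alpha\ne -1$ is precisely what ensures the existence of the unit $e_7$ used throughout.
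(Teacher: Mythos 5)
Your unit verification and the spectrum computations are correct and agree with the paper ($L_{e_1}$ has eigenvalues $1,0,\alpha$; the conjugates follow from $L_{\bar e_i}=I-L_{e_i}$). The genuine gap is in the step you call the main conceptual content: you cannot conclude that your list of eight elements is exhaustive from the bound \eqref{cardinality}, because that inequality is conditional on $\mathbf{P}(A_\mathbb{C})$ being finite. B\'ezout's theorem only gives the dichotomy stated in \Cref{sec:prel}: either infinitely many projective solutions of \eqref{FX}, or at most $2^n$ of them. Exhibiting $2^n$ distinct idempotents does not rule out the first alternative, so it certifies neither the absence of further idempotents nor the absence of $2$-nilpotents. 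The case $\alpha=\tfrac12$ shows the argument proves too much: there the eight elements $0,e_7,e_1,e_2,e_3,\bar e_1,\bar e_2,\bar e_3$ still exist and are pairwise distinct, yet $3C_{1/2}$ carries the whole circle \eqref{circle1} of idempotents and is not generic. Correspondingly, the hypothesis $\alpha\ne\tfrac12$ is never actually used in your argument; your closing appeal to \Cref{the:12} only explains why the exclusion is \emph{necessary}, it does not supply the missing sufficiency.

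To close the gap you must genuinely use $\alpha\ne\tfrac12$ when establishing completeness. The paper's route is the direct one: write $x=x_1e_1+x_2e_2+x_3e_3$, expand $x^2=x$ via \eqref{Matsuo1} into three quadratic equations, and check that for $\alpha\ne-1,\tfrac12$ the system has exactly the eight solutions listed and that $x^2=0$ forces $x=0$; this simultaneously yields finiteness, the idempotent list, and the absence of $2$-nilpotents. Alternatively, you could keep your list but add that each exhibited idempotent $c$ satisfies $\chi_c(\tfrac12)\ne 0$ (which, by your own spectra, holds exactly when $\alpha\ne\tfrac12$), so each is a regular, multiplicity-one solution of the fixed-point equation \eqref{FiX}; then a multiplicity/excess-intersection form of B\'ezout shows that eight simple solutions already exhaust the intersection degree $2^3$, leaving no room for further components at finite or infinite distance. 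Either way, some argument beyond the plain bound \eqref{cardinality} is required, and that is precisely where $\alpha\ne\tfrac12$ enters.
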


\begin{remark}
In the exceptional case $\alpha = \half12$,  there  exists an infinite family of idempotents $c_x:=x_1e_1+x_2e_2+x_3e_3$ on the circle
\begin{equation}\label{circle1}
(x_1-\half13)^2+(x_2-\half13)^2+(x_3-\half13)^2=\half23, \quad x_1+x_2+x_3=1.
\end{equation}
Then $e_1,e_2,e_3,e_4,e_5,e_6$  lie on the circle \eqref{circle1}, while $e_0=0$ and the unit $e_7$ lies outside, see the figure below.

\begin{figure}[ht]
\begin{tikzpicture}[scale=0.13]
\draw (0,0) circle (13cm);
\draw[fill=black] (0,-2) circle (2mm);
\draw (1,-2) node[below] {$e_0$};
\draw  (-9.2,9) -- (-13,9) -- (-22,0) -- (14,0) -- (21,9) -- (9.2,9);
\draw[dashed] (-9.2,9) -- (9.2,9);
\draw[fill=black] (0,14) circle (2mm);
\draw (1,14) node[right] {$e_7$};
\draw[color=red] (-12,5) arc (180:360:12cm and 2cm);
\draw[dashed,color=red] (-12,5) arc (180:0:12cm and 2cm);
\draw[color=red] (-11,4.1) circle (2mm);
\draw[color=red] (-11,4) node[below] {$e_2$};
\draw[color=red] (-2,3.1) circle (2mm);
\draw[color=red] (-2,3) node[below] {$e_5$};
\draw[color=red] (10.4,4.1) circle (2mm);
\draw[color=red] (10,4) node[below] {$e_3$};
\draw[color=red] (10,6.1) circle (2mm);
\draw[color=red] (9,6) node[above] {$e_4$};
\draw[color=red] (1,7.1) circle (2mm);
\draw[color=red] (0,6.5) node[above] {$e_1$};
\draw[color=red] (-9,6.1) circle (2mm);
\draw[color=red] (-9,6) node[above] {$e_6$};

\end{tikzpicture}
\end{figure}

One readily verifies that the Peirce spectrum of all $c$ lying in \eqref{circle1} is the same and is equal to
$\sigma(c)=\{\half12, 1,0\}$. In particular, the only Matsuo algebra $3C_{\frac{1}{2}} $ is a power associative.

\end{remark}

\subsection{The generalized Matsuo algebras}\label{3Cab}

The last example admits the following generalization. Let us define $A=3C_{\alpha,\epsilon}$ being the three-dimensional algebra over $\field$ containing $\alpha$,$\epsilon$,  $\Char \field \ne 2$ and spanned by three idempotents $e_1,e_2,e_3$ subject to the algebra identities
\begin{equation}\label{MatsuoGen}
e_ie_j=\frac{\alpha}{2}(e_i+e_j)+\frac{(\epsilon-\alpha)}{2}e_k, \quad \{i,j,k\}=\{1,2,3\}.
\end{equation}
This obviously determines a unique algebra structure on $\field^3$. Note that
$$
3C_{\alpha}=3C_{\alpha,0},
$$
and the new algebra structure can be thought of as a perturbation of the original structure. Under conditions \eqref{MatsuoGen}, the spectrum of each $e_i$ is found to be
$$
\sigma(e_i)=\{1,\,\,\alpha-\half12\epsilon, \,\, \half12\epsilon \}.
$$
Furthermore, a simple analysis reveals that if
\begin{equation}\label{condit}
(\alpha+1+\epsilon)(\epsilon-1) \gamma\ne 0,
\end{equation}
where
$$
\gamma:=\alpha+1+\epsilon(\epsilon-2\alpha-1),
$$
then there exists exactly $7=2^3-1$ distinct nonzero idempotents in $A$, i.e. the algebra $3C_{\alpha,\epsilon}$ is \textit{generic}.
In that case the remained four idempotents are
\begin{align*}
e_7\,\,\,\,&=\frac{1}{\alpha+1+\epsilon}(e_1+e_2+e_3),
\\
e_{3+i}&=\frac{(1-\epsilon)(\alpha+1+\epsilon)} {\gamma}e_7-
\frac{\alpha+1-2\epsilon}{\gamma}e_i, \quad i=1,2,3.
\end{align*}
One also can readily see that the idempotent $e_7$ is the unity element in $A$ if and only if $\epsilon=0$ (in which case, the algebra $3C_{\alpha,0}$ is isomorphic to the Matsuo algebra $3C_{\alpha}$). The Peirce spectrum then is found to be

$$
\sigma(e_7)=\{1,\mu, \mu\}, \quad \text{where }\mu=1-\frac{3\epsilon}{2(1+\alpha+\epsilon)}
$$
and
$$
\sigma(e_{i+3})=\{1,\,\,\frac{\epsilon(2-\alpha-\epsilon)} {2\gamma}, \,\, \frac{2(1-\alpha^2)+\epsilon(3\alpha-\epsilon-2)} {2\gamma}\},\qquad i=1,2,3.
$$

In summary, for generic $\alpha$ and $\epsilon$ the Peirce spectrum of $A$ except for $0$ and $1$ contains five distinct Peirce numbers\footnote{A  three dimensional generic algebra may a priori have $14=(2^3-1)\times (3-1)$ distinct eigenvalues except $0$ and $1$.}.
It is also straightforward to see that the syzygies, for example in the form \eqref{idemm1}, hold for the obtained  algebra spectrum.

\begin{remark}
In the exceptional cases $\epsilon =1$ and $\epsilon=2\alpha-1$, there  exists an infinite family of (non isolated) idempotents and for $\gamma= 0$ there are 2-nilpotent elements.
\end{remark}

%
%
%
%
%

\section{Metrised algebras}\label{sec:examples}
Note that many  examples discussed in Introduction are \textit{metrized} algebras: they obey a non-trivial  bilinear form $\scal{x}{y}$ which associates with multiplication, i.e.
\begin{equation}\label{Qass}
\scal{xy}{z}=\scal{x}{yz}\quad \text{ for all }x,y,z\in A,
\end{equation}
(cf. \cite{Bordemann}, \cite[p.~453]{Knus}). The standard examples are Lie algebras with the Killing form and Jordan algebras with the generic trace bilinear form.  We say that $A$ is a \textit{Euclidean} metrised algebra if the bilinear form $$\scal{x}{y}$$ is  positive definite. Sometimes an associative bilinear form is also called a trace form \cite{Schafer}.

The condition \eqref{Qass} is very strong and implies that the multiplication operator $L_x:y\to xy$ is self-adjoint for any $x\in A$:
\begin{equation}\label{selff}
\scal{L_xy}{z}=\scal{y}{L_xyz}, \qquad \forall x,y,z\in V.
\end{equation}
In particular, all idempotents are automatically semisimple. If additionally $A$ is an algebra over $\R{}$ then $L_x$ has a real spectrum for any $x$.

\subsection{Algebras of cubic forms}

The category of metrized algebras is in a natural correspondence with the category of cubic forms on vector spaces with a distinguished inner product. Since the latter is an analytic object, it has many computational advances and is a useful tool for constructing diverse examples of non-associative algebras. Below, we briefly recall the correspondence, see also \cite{NTVbook}.

Let $(V,\scal{}{})$ be an inner product vector space, i.e. a vector space over  $\field$ endowed  with a non-singular symmetric $\field$-bilinear form $\scal{x}{y}$. Given a cubic form $u(x)$ on $V$ we define the multiplication by duality:
\begin{equation}\label{muu}
xy :=\text{  the unique element satisfying } \scal{xy}{z}=u(x,y,z) \text{ for all } z\in V
\end{equation}
where
$$
u(x,y,z)=u(x+y+z)-u(x+y)-u(x+z)-u(y+z)+u(x)+u(y)+u(z)
$$
is the full linearization of $u$. Since $\scal{\cdot }{\cdot}$ is nonsingular, such an element $xy$ exists and unique. Thus defined algebra is denoted by $V(u)$ if the definition of $\scal{}{}$ is obvious.

Note also that the trilinear form $u(x,y,z)$ is symmetric, hence
\textit{the algebra $V(u)$ is always commutative.}

It follows from the homogeneity  that the cubic form $u$ is recovered by
\begin{equation}\label{16}
6u(x)=u(x,x,x)=\scal{x}{x^2}.
\end{equation}
Moreover, in this setting, the directional derivative (or the first linearization of $u$) is expressed by
\begin{equation}\label{hesu}
\partial_y u(x)=\half12u(x,x,y)=:u(x;y)
\end{equation}
hence the multiplication is recovered explicitly by
\begin{equation}\label{hessmul}
xy=D^2u(x)y=D^2u(y)x,
\end{equation}
where $D^2u(x)$ is the Hessian matrix of $u$ at $x$. In particular,
the gradient of $u(x)$ is essentially the square of the element $x$ (in $V(u)$):
\begin{equation}\label{grad}
Du(x)=\frac{1}{2}xx=\frac{1}{2}x^2,
\end{equation}

The following correspondence is an immediate corollary of the definitions.

\begin{proposition}\label{pro:cub}
Given a vector space $V$ with a  non-singular symmetric bilinear form $\scal{\cdot}{\cdot}$, there exists a canonical bijection between the vector space of all cubic forms on $V$ and commutative metrised algebras with the multiplication  $(x,y)\to xy$  uniquely determined by \eqref{muu}.
\end{proposition}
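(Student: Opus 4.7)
The plan is to establish the bijection by constructing the two maps explicitly and verifying they are mutual inverses. In one direction we are given a cubic form $u$ on $V$; in the other we are given a commutative metrised algebra structure on $V$, and in both cases the inner product $\scal{\cdot}{\cdot}$ is fixed.

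First I would define the forward map $u \mapsto V(u)$. For each pair $x,y\in V$, the assignment $z\mapsto u(x,y,z)$ is $\field$-linear in $z$ (since $u$ is cubic, its full polarization $u(\cdot,\cdot,\cdot)$ is trilinear). Because $\scal{\cdot}{\cdot}$ is non-singular, Riesz-type representation gives a unique element $xy\in V$ with $\scal{xy}{z}=u(x,y,z)$ for all $z$, so the multiplication in \eqref{muu} is well defined. Symmetry of the trilinear form $u(x,y,z)$ (a standard property of the full linearization of a homogeneous form) under the transposition $x\leftrightarrow y$ forces $xy=yx$, and the transposition $y\leftrightarrow z$ gives $\scal{xy}{z}=u(x,y,z)=u(x,z,y)=\scal{xz}{y}=\scal{y}{xz}$, which is \eqref{Qass}. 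Hence $V(u)$ is a commutative metrised algebra.

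Next I would construct the inverse map. Given a commutative metrised algebra $A$ on $V$, define $u_A(x):=\tfrac{1}{6}\scal{x}{x^2}$. This is manifestly a homogeneous polynomial of degree $3$, hence a cubic form. The key computation is to verify that the full linearization of $u_A$ satisfies
\[
u_A(x,y,z)=\scal{xy}{z},
\]
which is a routine polarization calculation: expand $6u_A(x+y+z)-6u_A(x+y)-\cdots$ using bilinearity of multiplication and of $\scal{\cdot}{\cdot}$, and use commutativity together with \eqref{Qass} to collect the cross terms into the single symmetric expression $6\scal{xy}{z}$. (The normalization $\tfrac{1}{6}$ is chosen precisely to absorb the combinatorial factor, in agreement with \eqref{16}.)

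Now I would close the loop. If one starts with $u$, builds $V(u)$, and then forms $u_{V(u)}(x)=\tfrac{1}{6}\scal{x}{x^2}=\tfrac{1}{6}u(x,x,x)=u(x)$ by \eqref{16}, so $u_{V(u)}=u$. Conversely, starting with a commutative metrised algebra $A$ and forming $V(u_A)$, the multiplication $x\cdot_{V(u_A)}y$ is characterized by $\scal{x\cdot_{V(u_A)}y}{z}=u_A(x,y,z)=\scal{xy}{z}$ for all $z$; non-singularity of $\scal{\cdot}{\cdot}$ then forces $x\cdot_{V(u_A)}y=xy$, so $V(u_A)=A$. This proves the bijection, and the canonicity is evident since everything is defined without any choice of basis.

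The whole argument is essentially bookkeeping about polarization; the only step that requires genuine care is the identification $u_A(x,y,z)=\scal{xy}{z}$, where both commutativity and the associativity \eqref{Qass} of the form must be used simultaneously to ensure the polarized expression is symmetric in all three slots. This is precisely the place where metrisedness enters; without it the forward map would not be invertible, because a general (non-metrised) algebra cannot be recovered from the scalar $\scal{x}{x^2}$ alone.
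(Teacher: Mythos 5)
Your proof is correct and follows the same route the paper takes: the paper treats the proposition as an immediate consequence of the definition \eqref{muu} together with the recovery formula \eqref{16}, and your argument simply writes out that verification (well-definedness from non-singularity, commutativity and \eqref{Qass} from the symmetry of the polarization, and the two round-trip identities). Nothing is missing; your closing remark about where metrisedness is genuinely needed is the right observation.
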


\begin{proposition}\label{pro:zeroalgebra}
The metrized algebra $V(u)$ is a zero algebra if and only if $u\equiv 0$.
\end{proposition}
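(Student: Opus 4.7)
The plan is to show both implications directly from the defining relation \eqref{muu} together with the recovery formula \eqref{16}, using only that $\scal{\cdot}{\cdot}$ is non-singular.

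For the forward direction, I would assume $u\equiv 0$. Then its full linearization $u(x,y,z)$ vanishes identically, so \eqref{muu} gives $\scal{xy}{z}=0$ for every $z\in V$. By non-singularity of the bilinear form this forces $xy=0$ for all $x,y\in V$, i.e.\ $V(u)$ is a zero algebra.

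For the converse, suppose $V(u)$ is a zero algebra, i.e.\ $xy=0$ for all $x,y$. Substituting $y=x$ yields $x^2=0$, hence by \eqref{16}
\[
6u(x)=\scal{x}{x^2}=\scal{x}{0}=0
\]
for every $x\in V$. Since $\Char\field =0$, this gives $u\equiv 0$.

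There is essentially no obstacle here: the statement is a tautology once one uses the characterizations \eqref{muu} and \eqref{16} and the fact that a cubic form is determined by its values on the diagonal (which in characteristic zero is immediate). The only point worth flagging is that non-singularity of $\scal{\cdot}{\cdot}$ is needed for the forward direction, while the assumption $\Char\field=0$ (recall the global hypothesis in \Cref{sec:prel}) is used to pass from $6u(x)=0$ to $u(x)=0$ in the converse.
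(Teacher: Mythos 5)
Your proof is correct and follows essentially the same route as the paper: both directions come down to the identity $6u(x)=\scal{x}{x^2}$ from \eqref{16} together with polarization/linearization and the non-singularity of the form built into \eqref{muu}. The paper merely phrases both implications contrapositively, so there is no substantive difference.
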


\begin{proof}
Let $A=V(u)$ be a non-zero metrized algebra. Suppose by contradiction that $u(x)\equiv 0$, then $\scal{x}{x^2}\equiv0$ and the polarization yields $\scal{xy}{z}\equiv 0$ for all $x,y,z\in V$, thus $xy=0$ implying $AA=\{0\}$, a contradiction. In the converse direction, if $u(x)\not\equiv 0$ then $u(x_0)=\half16\scal{x_0}{x_0^2}\ne0$ for some $x_0$, which obviously yields $x_0x_0\ne 0$, hence $V(u)$ is a non-zero algebra.
\end{proof}

\begin{proposition}\label{Lem:1}
If $A$ is a nonzero Euclidean metrised algebra then $\Idm(A)\ne \emptyset$.
\end{proposition}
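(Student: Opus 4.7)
The plan is to find an idempotent by a variational/Lagrange multiplier argument applied to the associated cubic form $u(x)=\frac{1}{6}\scal{x}{x^2}$ on the Euclidean unit sphere.

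First I would observe that since $A$ is a nonzero algebra, \Cref{pro:zeroalgebra} gives $u\not\equiv 0$. Because $u$ is homogeneous of degree $3$, this means $u$ is not identically zero on the unit sphere $S=\{x\in V:\scal{x}{x}=1\}$; moreover, since $u(-x)=-u(x)$, the maximum $M:=\max_{x\in S}u(x)$ is strictly positive (if $u(y)<0$ for some $y\in S$, then $u(-y)>0$). Compactness of $S$ together with continuity of $u$ guarantees that this maximum is attained at some $x_0\in S$.

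Next I would apply the Lagrange multiplier rule to the constrained problem. Since $x_0$ is a critical point of $u$ on $S$, there exists $\mu\in \R{}$ with $Du(x_0)=\mu x_0$ in the sense that $\scal{Du(x_0)}{v}=\mu \scal{x_0}{v}$ for all $v\in V$. Combining this with the identity $Du(x)=\frac{1}{2}x^2$ (formula \eqref{grad}) yields the algebraic relation
\begin{equation*}
x_0^2 = 2\mu\, x_0.
\end{equation*}
Pairing both sides with $x_0$ and using \eqref{16} gives $6u(x_0)=\scal{x_0}{x_0^2}=2\mu\scal{x_0}{x_0}=2\mu$, hence $\mu=3u(x_0)=3M>0$.

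Finally I would rescale: set $c:=\frac{1}{2\mu}x_0$. Then
\begin{equation*}
c^2=\frac{1}{4\mu^2}x_0^2=\frac{1}{4\mu^2}(2\mu x_0)=\frac{1}{2\mu}x_0=c,
\end{equation*}
and $c\ne 0$ because $x_0\ne 0$ and $\mu\ne 0$. Thus $c\in \Idm(A)$, proving $\Idm(A)\ne \emptyset$. The only subtle step — and the one I would double-check carefully — is the positivity of $\mu$, since without it the rescaling breaks down; this is precisely where Euclideanness of the inner product and the odd parity $u(-x)=-u(x)$ are used to guarantee $M>0$.
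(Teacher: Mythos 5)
Your proof is correct and follows essentially the same route as the paper: maximize the cubic form $u(x)=\frac16\scal{x}{x^2}$ on the compact unit sphere, use oddness of $u$ to get a strictly positive maximum, apply the first-order (Lagrange/stationarity) condition together with $Du(x)=\frac12 x^2$ to conclude $x_0^2$ is a positive multiple of $x_0$, and rescale to obtain an idempotent. The only cosmetic difference is that the paper phrases the stationarity condition via vanishing of the tangential directional derivatives rather than an explicit multiplier $\mu$, which is the same argument.
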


\begin{proof}
Let $S=\{x\in V:\scal{x}{x}=1\}$ for the unit hypersphere $S$ in $V$. Then $S$  is compact in the standard Euclidean topology on $V$.
By \Cref{pro:zeroalgebra}, the cubic form $u(x)=\half16\scal{x^2}{x}\not\equiv0$. Since $u$ is continuous as a function on $S$, it attains its global maximum value there, say at some point $y\in V$, $\scal{y}{y}=1$. Since $u$ is an odd function, the maximum value $u(y)$ is strictly positive. We have the stationary equation $0=\partial_x u|_{y}$ whenever $x\in V$ satisfies the tangential condition $\scal{x}{y}=0$. Thus,  using \eqref{hesu} we obtain
$$
0=\partial_x u|_{y}=3u(y;x)=\half{1}{2}\scal{y^2}{x}
$$
which implies immediately that $y$ and $y^2$ are parallel, i.e. $y^2=ky$, for some $k\in \R{\times}$ (observe also that $k> 0$ by virtue of  $0<u(y)=\scal{y^2}{y}=k\scal{y}{y}$). Scaling $y$ appropriately, namely setting $c=y/k$ yields $c^2=c$.
\end{proof}

\begin{definition}
An idempotent $c\in V(u)$ constructed in the course of proof of \Cref{Lem:1} will be called \textit{extremal}.
\end{definition}

In other words, the set of extremal idempotents coincide with the set  of suitably normalized global maximum  points of the (degree zero homogeneous) function $\scal{x}{x^2}\scal{x}{x}^{-3/2}$. It follows that if $c$ is an extremal idempotent and $c_1$ is an arbitrary idempotent then
$$
\frac{1}{\|c\|}=\frac{\scal{c}{c^2}\,\,\,\,}{\scal{c}{c}^{3/2}}\ge
\frac{\scal{c_1}{c_1^2}\,\,\,\,}{\scal{c_1}{c_1}^{3/2}}=\frac{1}{\|c_1\|},
$$
i.e. the extremal idempotents have the minimal possible norm among all idempotents in $A$.

\begin{remark}
See  \cite{Lyubich1} for a topological proof of the existence of an idempotent element in nonassociative algebras, and also \cite{Lyubich2} for further generalizations.
\end{remark}

\subsection{The exceptional property of $\frac12$ in Euclidean metrised algebras}\label{sec:Except}

\begin{proposition}
\label{pro:max}
Let $A$ be a Euclidean metrised algebra. Then for any extremal idempotent $c$ there holds
$$\sigma(c)\subset (-\infty,\half12].
$$
In particular, $1$ is a simple eigenvalue of $L_c$.
\end{proposition}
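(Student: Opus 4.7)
The plan is to exploit the variational characterization of an extremal idempotent supplied by the proof of \Cref{Lem:1}. There, $c = y/k$, where $y$ maximizes $u(x) = \tfrac{1}{6}\langle x, x^2\rangle$ on the unit sphere $S = \{x \in V : \langle x, x\rangle = 1\}$ and $k > 0$ satisfies $y^2 = ky$. Since the product is bilinear, $L_c = \tfrac{1}{k} L_y$, so $\sigma(L_c) = \tfrac{1}{k}\sigma(L_y)$ and it suffices to prove that every eigenvalue of $L_y$ other than $k$ lies in $(-\infty, k/2]$ and that $k$ is a simple eigenvalue.

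First I would extract a Rayleigh-type inequality from the constrained second-order maximality of $y$. With $g(x) = \langle x, x\rangle - 1$, the first-order Lagrange condition $Du(y) = \mu\, Dg(y)$, combined with $Du(y)(h) = \tfrac{1}{2}\langle y^2, h\rangle$ from \eqref{hesu} and $Dg(y)(h) = 2\langle y, h\rangle$, gives $y^2 = 4\mu y$, hence $\mu = k/4$. Passing to the second variation, by \eqref{muu} one has $D^2 u(y)(x,x) = u(x,x,y) = \langle L_y x, x\rangle$, and the necessary condition on the tangent space $y^\perp$ reads
$$
\langle L_y x, x\rangle \le \tfrac{k}{2}\langle x, x\rangle \qquad \text{for every } x \perp y.
$$

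The remainder is soft. By \eqref{selff}, $L_y$ is self-adjoint, so every eigenvector with eigenvalue different from $k$ lies in $y^\perp$; combined with the Rayleigh inequality this bounds any such eigenvalue by $k/2$. For simplicity of $k$: a nonzero $v \perp y$ with $L_y v = k v$ would force $k\|v\|^2 = \langle L_y v, v\rangle \le \tfrac{k}{2}\|v\|^2$, contradicting $k > 0$. Dividing by $k$ transfers the bounds to $L_c$. The one point requiring care is the Hessian normalization, so that the threshold comes out to be exactly $k/2$; once that bookkeeping is done, the strict positivity of $k$ (established in the proof of \Cref{Lem:1} from the fact that the odd function $u$ attains a strictly positive maximum on $S$) is what both separates $1$ from the remainder of $\sigma(c)$ and secures simplicity.
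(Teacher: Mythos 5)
Your argument is correct and it arrives at the same operator inequality on $c^\bot$ as the paper, but by a different computation. The paper maximizes the degree-zero homogeneous function $f(x)=\scal{x^2}{x}/|x|^3$, computes its gradient and Hessian explicitly (\eqref{D1f}, \eqref{D2f}), and evaluates $D^2f(c)\le 0$ at the idempotent itself, which after restriction to $c^\bot$ gives $2L_c-I\le 0$ there. You instead stay with the constrained problem on the unit sphere: the multiplier is identified as $\mu=k/4$, and the second-order Lagrange condition together with $D^2u(y)(x,x)=u(y,x,x)=\scal{L_yx}{x}$ (your normalization is the right one; it is consistent with \eqref{grad} and \eqref{hessmul}) yields $\scal{L_yx}{x}\le \frac{k}{2}|x|^2$ on $y^\bot$, which after the rescaling $L_c=\frac1k L_y$ is exactly the same inequality. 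Your use of self-adjointness \eqref{selff} to confine eigenvectors for eigenvalues other than $k$ to $y^\bot$, and the Rayleigh bound to exclude a second $k$-eigenvector, correctly delivers both $\sigma(c)\setminus\{1\}\subset(-\infty,\frac12]$ and the simplicity of $1$, which is the intended content of the statement. The trade-off between the two routes: yours is shorter, since it never needs the explicit Hessian of the quotient and the threshold $\frac{k}{2}$ falls out of the multiplier; the paper's explicit formulas \eqref{D1f}--\eqref{D2f} are, however, reused in the proof of \Cref{pro:max4}, where third and fourth derivatives of $f$ at $c$ produce the fusion rule for $A_c(\frac12)$, something the sphere-constrained formulation would only recover with higher-order constrained optimality conditions.
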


\begin{proof}
Then $f(x)=\frac{\scal{x^2}{x}}{|x|^3}$ is a homogeneous of degree zero function which is smooth outside the origin. We have
\begin{equation}\label{D1f}
\partial_y f|_{x}=\frac{3(\scal{x^2}{y}|x|^2-\scal{x^2}{x}\scal{x}{y})}{|x|^5},
\end{equation}
implying
$
\half13Df(x)=\frac{x^2|x|^2-\scal{x^2}{x}x}{|x|^5}.
$
Arguing similarly, we have for the second derivative
$$
\half13\partial_z Df(x)=\frac{2|x|^4xz-3|x|^2(x^2\scal{x}{z}+\scal{x^2}{z}x) -\scal{x^2}{x}|x|^2z +5\scal{x}{z}\scal{x^2}{x}x)}{|x|^7},
$$
hence
\begin{equation}\label{D2f}
\half13D^2f(x)=
\frac{2|x|^4L_x-\scal{x^2}{x}|x|^2-3|x|^2(x\otimes x^2+x^2\otimes x)+5\scal{x^2}{x} x\otimes x}{|x|^5}.
\end{equation}
Now let $c$ be an extremal idempotent of $V$. Then $c^2=c$ and $D^2f(c)\le 0$. The second condition implies
$$
2L_c-1\le c\otimes c.
$$
Since $L_c$ is self-adjoint and $\R{}c$ is an invariant subspace of $L_c$: $L_c=1$ on $\R{}c$, the orthogonal complement $c^\bot=\{x\in V: \scal{x}{c}=0\}$ is an invariant subspace too. Indeed, if $\scal{x}{c}=0$ then
$$
\scal{L_cx}{c}=\scal{x}{L_cc}=\scal{x}{c}=0.
$$
Therefore, using $c\otimes c=0$ on $c^\bot=\{x\in V: \scal{x}{c}=0\}$ we obtain $2L_c-1\le 0$ there, which yields the desired conclusion.
\end{proof}

It turns out that the presence of the exceptional value $\half12$ in the spectrum of a metrized algebra, its corresponding Peirce subspace possesses a certain fusion rule. Recall that any idempotent $c$ in the algebra $A=V(u)$ is  semi-simple, hence
$$
V=\bigoplus_{\lambda\in\sigma(c)} A_c(\lambda),\quad A_c(\lambda)=\ker (L_c-\lambda).
$$

\begin{proposition}
\label{pro:max4}
If $c$ is an extremal idempotent and $\half12\in \sigma(c)$ then $\scal{z^2}{z}=0$ for all $z\in A_c(\half12)$. In other words, the following fusion rule holds:
\begin{equation}\label{fus12}
A_c(\half12)A_c(\half12)\subset A_c(\half12)^\bot.
\end{equation}
\end{proposition}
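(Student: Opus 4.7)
The plan is to refine the maximality argument used in Proposition \ref{pro:max}. Recall that an extremal idempotent $c$ is (a rescaling of) a global maximum of the degree-zero homogeneous function $f(x)=\scal{x^2}{x}/|x|^3$, and the second-order condition $D^2 f(c)\le 0$ was used to establish $2L_c-1\le 0$ on $c^\bot$. A vector $z\in A_c(\half12)$ is precisely a direction in which this quadratic form degenerates to zero, so the decisive information about whether $c$ is still a maximum in that direction must come from the third-order term. Extracting it will yield the required cubic identity $\scal{z^2}{z}=0$.

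Concretely, I would parametrize $\gamma(t)=c+tz$ with $z\in A_c(\half12)$. Since $L_c$ is self-adjoint with eigenvalues $1$ on $\R{}c$ and $\half12$ on $A_c(\half12)$, we have $\scal{c}{z}=0$; combined with $cz=\half12 z$ and the associativity of $\scal{\cdot}{\cdot}$ giving $\scal{z^2}{c}=\scal{z}{cz}=\half12|z|^2$, one finds $\gamma(t)^2=c+tz+t^2z^2$ and
$$\scal{\gamma(t)^2}{\gamma(t)}=|c|^2+\tfrac{3}{2}t^2|z|^2+t^3\scal{z^2}{z},\qquad |\gamma(t)|^3=|c|^3\Bigl(1+\tfrac{3t^2|z|^2}{2|c|^2}+O(t^4)\Bigr).$$
Dividing, the $t^2$ contributions cancel (consistent with $z$ being a critical direction), leaving
$$f(\gamma(t))=\frac{1}{|c|}+\frac{t^3\scal{z^2}{z}}{|c|^3}+O(t^4).$$
Since $f(\gamma(t))\le f(c)=1/|c|$ for small $t$ of either sign and $t^3$ changes sign at $0$, this forces $\scal{z^2}{z}=0$ for every $z\in A_c(\half12)$.

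The fusion rule then follows by polarization. The trilinear form $T(x,y,w):=\scal{xy}{w}$ is fully symmetric by the commutativity of $A$ together with the associativity of $\scal{\cdot}{\cdot}$, and the preceding step says $T(z,z,z)=0$ for every $z\in A_c(\half12)$. In characteristic zero a symmetric trilinear form that vanishes on the diagonal vanishes identically, so $\scal{zw}{v}=T(z,w,v)=0$ for all $z,w,v\in A_c(\half12)$, i.e.\ $zw\in A_c(\half12)^\bot$. This is exactly \eqref{fus12}. The only delicate point in the argument is the bookkeeping of the $t^2$ cancellation between numerator and denominator; the rest is mechanical.
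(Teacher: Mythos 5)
Your proof is correct and is essentially the paper's own argument: both exploit that the extremal idempotent maximizes $f(x)=\scal{x^2}{x}/|x|^3$, that in a direction $z\in A_c(\half12)$ the first- and second-order terms degenerate, and that the resulting third-order term is (a positive multiple of) $\scal{z^2}{z}$, which must therefore vanish, after which polarization of the symmetric trilinear form $\scal{xy}{w}$ gives \eqref{fus12}. Your one-variable Taylor expansion of $f(c+tz)$ using $cz=\half12 z$, $\scal{c}{z}=0$ and $\scal{z^2}{c}=\half12|z|^2$ is just a cleaner way of computing the same third directional derivative that the paper obtains from its explicit formulas for $Df$ and $D^2f$.
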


\begin{proof}
Let $z\in A_c(\half12)$. Using \eqref{D1f} and \eqref{D2f} for $x=c$ yields the directional derivatives respectively
$$
\frac{\partial f}{\partial z}\bigl|_{x=c}=\frac{\partial^2 f}{\partial z^2}\bigl|_{x=c}=0.
$$
Therefore, since $c$ is a local maximum point of $f(x) =\frac{\scal{x^2}{x}}{|x|^3}$, we have for the higher derivatives $\frac{\partial^3 f}{\partial z^3}\bigl|_{x=c}=0$ and $\frac{\partial^4 f}{\partial z^4}\bigl|_{x=c}\,\ge 0$. Using \eqref{D2f} we have
$$
\frac13\frac{\partial^2 f}{\partial z^2}=
\frac{2|x|^2\scal{x}{z^2}-6|x|^2\scal{x}{z}\scal{ x^2}{z}+5\scal{x^2}{x} \scal{x}{z}^2-\scal{x^2}{x}|z|^2}{|x|^7}.
$$
Differentiating the latter identity and substituting $x=c$ yields by virtue of $c^2=c$ and $\scal{c}{z}=0$ that
\begin{align*}
\frac16\frac{\partial^3 f}{\partial z^3}\bigl|_{x=c}&=\frac{\scal{z^2}{z}}{|x|^3}=0.
\end{align*}
Then polarization of $\scal{z^2}{z}=0$ in $A_c(\half12)$ yields the desired fusion rule \eqref{fus12}.
\end{proof}

\subsection{Some examples of algebras of cubic forms}

Below, we consider some concrete examples of cubic forms on the Euclidean space $\R{n}$ and the corresponding metrised algebras. Recall that in this case, all idempotents are semi-simple: roots of the characteristic polynomial $\chi_c(t)$ are always real and the corresponding orthogonal decomposition in eigen-subspaces is the Peirce decomposition associated with $c$.

\begin{example}
\label{sec:red}
Let us consider the algebra of cubic form $u_1=\frac16(x_1^3+\ldots+ x_n^3)$. The algebra multiplication is determined by virtue of \eqref{hessmul}:
$$
xy=(x_1y_1,\ldots,x_ny_n),
$$
i.e. the algebra $V(u_1)$ is reducible and isomorphic to the product $\R{}\times \ldots \times \R{}$. It is easy to see that $V(u_1)$ has exactly  $2^n$ idempotents which coincide with the vertices of the unit cube in $\R{n}$:
$$
c_x=(x_1,\ldots,x_n),\quad \text{where }x_i=0 \text{ or } 1,
$$
The spectrum consists of two different values:
$
\lambda\in\{0, 1\}.
$
For each idempotent,
$$
\sigma(c_x)=\{1^m, 0^{n-m}\}, \quad \text{where }m=x_1+\ldots+x_n.
$$

\end{example}

\begin{example}
Let us consider a perturbation of the algebra $V(u_1)$ of the cubic form from \Cref{sec:red} for $n=3$. Then $V(u_1)\cong \R{}\times \R{}\times\R{}$ with the total spectrum (counting total multiplicities over all $8=2^3$ idempotents in $V(u_1)$)
$$
\biggl(\underbrace{0}_{12}, \, \underbrace{1}_{12}\biggr)
$$
Define  a  perturbation by adding the term $=\epsilon  x_1x_2x_3$, $\epsilon\in \R{}$:
$$
u_{1,\epsilon }(x)=\epsilon  x_1x_2x_3+\half16(x_1^3+x_2^3+x_3^3).
$$
Then one can show that for $\epsilon\not\in\{\pm\half 12, \half14,1\}$ the new algebra is also generic and the corresponding total spectrum is
$$
\biggl(\underbrace{0}_{3}, \, \underbrace{\epsilon}_{3},\,\underbrace{\epsilon}_{ 3},\,  \underbrace{\frac{2\epsilon (1-\epsilon )}{1-2\epsilon +4\epsilon ^2}}_3,\,
\underbrace{1}_{7},\,\underbrace{\frac{1-\epsilon }{1+2\epsilon }}_{2},\,
\underbrace{\frac{1-2\epsilon -2\epsilon ^2}{1-2\epsilon +4\epsilon ^2}}_3\biggr)
$$

Another interesting case when $V(u_{1,\epsilon})$ is still generic, is when $\epsilon=1$. Then  the total spectrum becomes much smaller:
$$
\biggl(\underbrace{0}_{8}, \, \underbrace{-1}_{6},\,\underbrace{1}_{10}\biggr)
$$
In this case the algebra contains the maximal number (totally 8) idempotents with resp. spectrum
\begin{align*}
(0^3)\oplus (0^2,1)\oplus 3\cdot (0,-1,1)\oplus 3\cdot (-1,1,1)
\end{align*}
\end{example}

\begin{example}
Let  $u_2=\frac12x_1(x_3^2-x_4^2)+i x_2x_3x_4$. \label{sec:Hscub}
The algebra $V(u_2)$  over $\mathbb{C}$ has 9 (of $16=2^4$ maximally possible) nonzero idempotents, one (normalized) 2-nilpotents that lying in the two dimensionalsubspace $x_3=x_4=0$. \textit{All idempotents have the same Peirce spectrum}:
$$
-\frac14-\frac{\sqrt{7}}{4}, \,\, -\frac12,\,\, -\frac14+\frac{\sqrt{7}}{4},\,\,1
$$
Note also that the trace $\trace L_c=0$.
\end{example}

\begin{example}\label{ex:same}
Let us consider the algebra $V(u)$ of the cubic form $$u(x)=\frac{1}{6}(3x_1^2+3x_2^2-(4k^2-2)x_3^2)x_3,
\qquad x\in \R{3},
$$
where $k\in\R{\times}$.
Then except for $c_0=(0,0,-\frac{1}{4k^2-2})$, all real nonzero idempotents lie on the circle
$$
c=\{x\in \R{3}:\,x_1^2+x_2^2=k^2, \,\, x_3=\frac12\}.
$$
There are no nontrivial 2-nilpotents in $V(u)$.
The idempotents $c$ lying on the circle have the \textit{same} Peirce spectrum: $\sigma(c)=\{1,\frac12,\frac12-2k^2\}$, while the spectrum $\sigma(c_0)=\{1,-\frac{1}{4k^2-2},-\frac{1}{4k^2-2}\}$.
\end{example}

\bibliographystyle{plain}

\begin{thebibliography}{10}

\bibitem{Albert48}
A.~A. Albert.
\newblock Power-associative rings.
\newblock {\em Trans. Amer. Math. Soc.}, 64:552--593, 1948.

\bibitem{ArnVarG}
V.~I. Arnold, S.~M. Gusein-Zade, and A.~N. Varchenko.
\newblock {\em Singularities of differentiable maps. {V}olume 1}.
\newblock Modern Birkh\"auser Classics.

\bibitem{Aupetit}
B.~Aupetit.
\newblock Projections in real algebras.
\newblock {\em Bull. London Math. Soc}, 13, 1981.

\bibitem{BKK12}
Z.~Balanov, A.~Kononovich, and Y.~Krasnov.
\newblock Projective dynamics of homogeneous systems: local invariants,
  syzygies and the global residue theorem.
\newblock {\em Proc. Edinb. Math. Soc. (2)}, 55(3):577--589, 2012.

\bibitem{Bordemann}
M.~Bordemann.
\newblock Nondegenerate invariant bilinear forms on nonassociative algebras.
\newblock {\em Acta Math. Univ. Comenian.}, 66(2):151--201, 1997.

\bibitem{Darpo06}
Erik Darp\"o.
\newblock On the classification of the real flexible division algebras.
\newblock {\em Colloq. Math.}, 105(1):1--17, 2006.

\bibitem{Diet05}
Ernst Dieterich.
\newblock Classification, automorphism groups and categorical structure of the
  two-dimensional real division algebras.
\newblock {\em J. Algebra Appl.}, 4(5):517--538, 2005.

\bibitem{Esterle}
J.~Esterle and J.~Giol.
\newblock Polynomial and polygonal connections between idempotents in
  finite-dimensional real algebras.
\newblock {\em Bull. London Math. Soc}, 36:378--382, 2004.

\bibitem{FKbook}
J.~Faraut and A.~Kor{\'a}nyi.
\newblock {\em Analysis on symmetric cones}.
\newblock Oxford Mathematical Monographs. The Clarendon Press Oxford University
  Press, New York, 1994.
\newblock Oxford Science Publications.

\bibitem{Fulton}
William Fulton.
\newblock {\em Intersection theory}, volume~2 of {\em A Series of Modern
  Surveys in Mathematics]}.
\newblock Springer-Verlag, Berlin, second edition, 1998.

\bibitem{HRS15}
J.~I. Hall, F.~Rehren, and S.~Shpectorov.
\newblock Primitive axial algebras of {J}ordan type.
\newblock {\em J. Algebra}, 437:79--115, 2015.

\bibitem{Ivanov15}
A.~A. Ivanov.
\newblock Majorana representation of the {M}onster group.
\newblock In {\em Finite simple groups: thirty years of the atlas and beyond},
  volume 694 of {\em Contemp. Math.}, pages 11--17. Amer. Math. Soc.,
  Providence, RI, 2017.

\bibitem{Knus}
M.-A. Knus, A.~Merkurjev, M.~Rost, and J.-P. Tignol.
\newblock {\em The book of involutions}, volume~44 of {\em American
  Mathematical Society Colloquium Publications}.
\newblock American Mathematical Society, Providence, RI, 1998.
\newblock With a preface in French by J. Tits.

\bibitem{Lyubich1}
Ju.I. Lyubich.
\newblock Nilpotency of commutative {E}ngel algebras of dimension {$n\leq 4$}
  over {$R$}.
\newblock {\em Uspehi Mat. Nauk}, 32(1(193)):195--196, 1977.

\bibitem{Lyubich2}
Ju.I. Lyubich and A.~Tsukerman.
\newblock Idempotents in nonassociative algebras and eigenvectors of quadratic
  operators.
\newblock {\em arxiv:1403.4623}, 2014.

\bibitem{Matsuo05}
Atsushi Matsuo.
\newblock 3-transposition groups of symplectic type and vertex operator
  algebras.
\newblock {\em J. Math. Soc. Japan}, 57(3):639--649, 2005.

\bibitem{NTVbook}
N.~Nadirashvili, V.G. Tkachev, and S.~Vl{\u{a}}du{\c{t}}.
\newblock {\em Nonlinear elliptic equations and nonassociative algebras},
  volume 200 of {\em Mathematical Surveys and Monographs}.
\newblock American Mathematical Society, Providence, RI, 2014.

\bibitem{Norton94}
S.~Norton.
\newblock The {M}onster algebra: some new formulae.
\newblock In {\em Moonshine, the {M}onster, and related topics ({S}outh
  {H}adley, {MA}, 1994)}, volume 193 of {\em Contemp. Math.}, pages 297--306.
  Amer. Math. Soc., Providence, RI, 1996.

\bibitem{Peter2000}
Holger~P. Petersson.
\newblock The classification of two-dimensional nonassociative algebras.
\newblock {\em Results Math.}, 37(1-2):120--154, 2000.

\bibitem{PeterSch}
Holger~P. Petersson and Matthias Scherer.
\newblock The number of nonisomorphic two-dimensional algebras over a finite
  field.
\newblock {\em Results Math.}, 45(1-2):137--152, 2004.

\bibitem{Rehren15}
Felix Rehren.
\newblock {\em Axial algebras}.
\newblock 2015.
\newblock Ph.D. thesis, University of Birmingham.

\bibitem{Rehren17}
Felix Rehren.
\newblock Generalised dihedral subalgebras from the {M}onster.
\newblock {\em Trans. Amer. Math. Soc.}, 369(10):6953--6986, 2017.

\bibitem{RohrlW}
Helmut R\"ohrl and Manfred~Bernd Wischnewsky.
\newblock Subalgebras that are cyclic as submodules.
\newblock {\em Manuscripta Math.}, 19(2):195--209, 1976.

\bibitem{Schafer}
R..D. Schafer.
\newblock {\em An introduction to nonassociative algebras}.
\newblock Pure and Applied Mathematics, Vol. 22. Academic Press, New York,
  1966.

\bibitem{Segre}
B.~Segre.
\newblock Famiglie di ipersuperficie isoparametrische negli spazi euclidei ad
  un qualunque numero di demesioni.
\newblock {\em Atti. Accad. naz Lincie Rend. Cl. Sci. Fis. Mat. Natur.},
  21:203�207, 1938.

\bibitem{Shafarevich1}
Igor~R. Shafarevich.
\newblock {\em Basic algebraic geometry. 1}.
\newblock Springer, Heidelberg, third edition, 2013.
\newblock Varieties in projective space.

\bibitem{TianEvol}
Jianjun~Paul Tian.
\newblock {\em Evolution algebras and their applications}, volume 1921 of {\em
  Lecture Notes in Mathematics}.
\newblock Springer, Berlin, 2008.

\bibitem{Walcher1}
S.~Walcher.
\newblock On algebras of rank three.
\newblock {\em Comm. Algebra}, 27(7):3401--3438, 1999.

\end{thebibliography}

\def\cprime{$'$} \def\cprime{$'$}

\end{document}